\newtheorem{Theorem}{Theorem}[section]
\newtheorem{Potential Theorem}[Theorem]{Potential Theorem}
\newtheorem{Lemma}[Theorem]{Lemma}
\newtheorem{Corollary}[Theorem]{Corollary}
\newtheorem{Proposition}[Theorem]{Proposition}
\theoremstyle{definition}
\newtheorem{Example}[Theorem]{Example}
\newtheorem{Definition}[Theorem]{Definition}
\newtheorem{Question}[Theorem]{Question}
\theoremstyle{remark}
\DeclareMathOperator{\depth}{depth}
\DeclareMathOperator{\length}{\lambda}
\DeclareMathOperator{\rank}{rank}
\DeclareMathOperator{\frk}{frk}
\DeclareMathOperator{\ehk}{e_{HK}}
\DeclareMathOperator{\eh}{e}
\DeclareMathOperator{\fpt}{fpt}
\DeclareMathOperator{\s}{s}
\DeclareMathOperator{\Trace}{Tr}
\DeclareMathOperator{\Dis}{D}
\DeclareMathOperator{\Gr}{Gr}
\DeclareMathOperator{\lc}{H}
\def\m{\mathfrak{m}}
\def\R{\mathbb{R}}
\def\N{\mathbb{N}}
\renewcommand{\geq}{\geqslant} 
\renewcommand{\leq}{\leqslant}
\newcommand{\ul}{\underline}
\newcommand{\pow}[2]{(#1)^{\oplus #2}}
\begin{document}

\title{Continuity of Hilbert--Kunz multiplicity and F-signature}
\author{Thomas Polstra}
\thanks{Polstra was supported in part by NSF Postdoctoral Research Fellowship DMS $\#1703856$.}
\address{Department of Mathematics, University of Utah, Salt Lake City}
\email{polstra@math.utah.edu}
\author{Ilya Smirnov}
\address{Department of Mathematics, University of Michigan, Ann Arbor}
\curraddr{
Department of Mathematics, Stockholm University, SE - 106 91 Stockholm, Sweden
}

\email{ismirnov@umich.edu}

\begin{abstract} We establish the continuity of Hilbert--Kunz multiplicity and F-signature as functions from a Cohen-Macaulay local ring $(R,\m,k)$ of prime characteristic to the real numbers at reduced parameter elements with respect to the $\m$-adic topology.

\end{abstract}

\keywords{Hilbert--Kunz multiplicity, F-signature, F-pure threshold}
\subjclass[2010]{13D40, 13A35, 13H15, 14B05}

\maketitle

\section{Introduction}

In this paper we consider the problem of determining when two rings have similar singularities. As a motivating example let $R = k[[x_1, \ldots, x_n]]$ be the power series ring in $n$ variables over a field $k$ and $f \in R$. 
If we add a term of sufficiently high order to $f$, will the new singularity be similar to that of $f$?

The origin of the above question can be traced at least as far back as 1956. Samuel (\cite{Samuel1956})
proved if the Jacobian ideal $J(f)$ of $f$ is $\m$-primary and if $f - g \in (x_1, \ldots, x_n)J(f)^2$, then there is an automorphism $\varphi\colon R\to R$ such that $\varphi(f)=g$. In particular, Samuel's result asserts that if $f$ has an isolated singularity and $g$ is sufficiently close to $f$, then the rings $R/(f)$ and $R/(g)$ are isomorphic. Samuel's result was furthered by Hironaka (\cite{Hironaka1965}): if $R/I$ is a reduced and equidimensional isolated singularity, then there is an integer $e > 0$ such that for any ideal $J$ that also defines a reduced and equidimensional singularity of same dimension and if $I \equiv J \mod (x_1, \ldots, x_n)^e$ then $R/I \cong R/J$.

More recently, Cutkosky and Srinivasan have extended Samuel's result to include all complete intersection prime ideals and Hironaka's theorem to to all reduced equidimensional ideals. See (\cite{CutkoskySrinivasan1993}) for precise statements.

However, when $f\in k[[x_1,\ldots,x_n]]$ does not have an isolated singularity, the results of Samuel, Hironaka, and Cutkosky and Srinivasan do not allow us to relate the singularities of $R/(f)$ with the singularities of $R/(g)$. To compare the singularity of $R/(f)$ with the singularity of $R/(g)$ we first require a formal notion of ``closeness'' of singularities. For example, one could require the Hilbert--Samuel functions of $R/(f)$ and $R/(g)$ to coincide. Srinivas and Trivedi 
(\cite{SrinivasTrivedi}) have shown if $I=(f_1,\ldots, f_h)$ is a complete intersection\footnote{This result was recently reproved by Adamus and Patel (\cite{AP}) using a more combinatorial method.}, or more generally if $I$ is a parameter ideal in a generalized Cohen-Macaulay ring, then the associated graded rings of $I$ and an ideal $J=(f_1+\epsilon_1,\ldots, f_h+\epsilon_h)$ will be isomorphic, provided $\epsilon_1,\ldots \epsilon_h$ are in a sufficiently large power of the maximal ideal. 

We now turn our attention to numerical invariants in positive characteristic. In particular, we introduce and explore the behavior of F-pure threshold, Hilbert--Kunz multiplicity, and F-signature with respect to $\m$-adic topologies. In what follows, let $(R, \mathfrak m)$ be a local ring of prime characteristic $p$. Let $F\colon R\to R$ denote the Frobenius endomorphism. Given ideal $I\subseteq R$ and $e\in \N$ let $I^{[p^e]}=(i^{p^e}\mid i\in I)$ be the expansion of $I$ along $F^e$. We shall always assume $R$ is F-finite, meaning that $F$ is a finite map. 

The study of F-pure thresholds was initiated by Takagi and Watanabe (\cite{TakagiWatanabe2004}). If $(R,\m,k)$ is regular then the F-pure threshold of an element $f\in R$ can be defined as the limit 
$\fpt(f)=\lim\limits_{e\to \infty}\frac{\nu_e(f)}{p^e}$ where $\nu_e(f)=\max \{t\in \N\mid f^t\not \in \m^{[p^e]}\}.$ Two elementary observations concerning F-pure thresholds are that if $f,g\in R$ then $\fpt(f+g)\leq \fpt(f)+\fpt(g)$ and that if $f\in \m^{[p^{e_0}]}$ then $\fpt(f)\leq \frac{1}{p^{e_0}}$. From these observations it follows that the F-pure threshold function is continuous as a function from $R\to \R$:  specifically, if $f\in R$ and $\delta>0$ then there exists $N\in \N$ such that for all $g\in R$ such that $f-g\in \m^N$, $|\fpt(f)-\fpt(g)|<\delta$. Continuity of the F-pure threshold function motivates studying continuity properties of Hilbert--Kunz multiplicity and F-signature 
and sheds light onto the problem of relating the singularities of $R/(f)$ with $R/(g)$ when $f$ and $g$ are suitably close.

In 1983, Paul Monsky, building on the earlier work of Ernst Kunz (\cite{Kunz1969,Kunz1976}), defined a new invariant, the Hilbert--Kunz multiplicity of $R$, as the limit
\[
\ehk (R) = \lim_{e \to \infty} \frac{\length (R/\mathfrak m^{[p^e]})}{p^{ed}}.
\]
Monsky (\cite{Monsky1983}) showed that this limit always exists. Values of $\ehk(R)$ dictate the severity of the singularities of $R$. Most notable is that under mild hypotheses $\ehk(R)\geq 1$ with equality if and only if $R$ is regular (\cite{WatanabeYoshida2000}). It has also been shown in (\cite{BlickleEnescu, AberbachEnescu2008}) that small enough values of $\ehk(R)$ imply $R$ is Gorenstein and strongly F-regular. To be able to relate the singularities of $R/(f)$ and $R/(g)$ when $f$ and $g$ are suitably close it is natural to investigate the following question, which was originally asked to the second author by Luis N\'u\~nez-Betancourt:

\begin{Question}\label{hypersurface q}
Let $(R,\m,k)$ be a regular local ring of prime characteristic. Is the Hilbert--Kunz multiplicity function defined by $f \mapsto \ehk (R/(f))$ a continuous function in the $\m$-adic topology? Namely, if $f \in R$ is an element and we are given $\delta > 0$, can we find an integer $N$ such that for all $g \in R$ such that $f - g \in \m^N$,
\[
\left |\ehk (R/(f)) - \ehk(R/(g)) \right| < \delta?
\]
\end{Question}

We now turn our attention to F-signature. Given finitely generated $R$-module $M$ we let $\frk(M)$ denote the largest rank of a free $R$-module $F$ for which there is a surjection $M\to F$. The F-signature of $(R,\m,k)$ was defined by Huneke and Leuschke (\cite{HunekeLeuschke}), conceptualizing the earlier work \cite{SmithVdB} of Smith and Van den Bergh, as the limit 
\[
\s (R) =\lim_{e\to \infty}\frac{\frk(R^{1/p^e})}{\rank(R^{1/p^e})}.
\]

The number $\frk(R^{1/p^e})$ will be denoted by $a_e(R)$ and refered to as the $e$th Frobenius splitting number of $R$. Tucker (\cite{TuckerF-sigExists}) showed existence of the above limit for all local rings.\footnote{The existence of the above limit has been recently established without the local hypothesis in (\cite{DPY}).} F-signature is a measurement of singularities: in particular, $s(R)\leq 1$ with equality if and only if $R$ is regular by (\cite{HunekeLeuschke}) and $s(R)>0$ if and only if $R$ is strongly F-regular by (\cite{AberbachLeuschke}). As with Hilbert--Kunz multiplicity, it is natural to relate the singularities of $R/(f)$ with $R/(g)$ by investigating continuity properties of F-signature with respect to the $\m$-adic topology.

\begin{Question}\label{hypersurface q 2}
Let $(R,\m,k)$ be a regular local ring of prime characteristic. Is the F-signature function defined by $f \mapsto s(R/(f))$ a continuous function with respect to the $\m$-adic topology? Namely, if $f \in R$ is an element and we are given $\delta > 0$, can we find an integer $N$ such that for all $g \in R$ such that $f - g \in \m^N$ 
\[
\left |s(R/(f)) - s(R/(g)) \right| < \delta?
\]
\end{Question}

We are able to answer Question~\ref{hypersurface q}  and Question~\ref{hypersurface q 2},
provided that $f$ has no multiple factors, i.e., $R/(f)$ is reduced. 
Our main contribution is the following theorem:

\begin{Theorem}[{Corollary~\ref{HK continuity} and Theorem~\ref{not main thm}}]\label{Intro theorem}
Let $(R, \m, k)$ be a complete Cohen-Macaulay F-finite ring of prime characteristic $p > 0$. If $(f_1, \ldots, f_c)$ is a parameter ideal such that $R/(f_1, \ldots, f_c)$ is reduced then for any $\delta > 0$ there exists an integer $N > 0$ such that for any $\epsilon_1, \ldots, \epsilon_c \in \m^N$:
\[
|\ehk(R/(f_1, \ldots, f_c)) - \ehk (R/(f_1 + \epsilon_1, \ldots, f_c + \epsilon_c))| < \delta
\]
If we additionally assume that $R$ is Gorenstein then there exists an integer $N > 0$ such that for any $\epsilon_1, \ldots, \epsilon_c \in \m^N$ 
\[
|\s (R/(f_1, \ldots, f_c)) - \s (R/(f_1 + \epsilon_1, \ldots, f_c + \epsilon_c))| < \delta.
\]

\end{Theorem}

We show that Hilbert--Kunz multiplicity and F-signature functions are the uniform limit of functions which are locally constant with respect to the $\m$-adic topology. Naively, one might also expect the Hilbert--Kunz multiplicity and F-signature functions are locally constant with respect to the $\m$-adic topology: 
$\ehk (R/(f)) = \ehk (R/(g))$ and $s(R/(f))=s(R/(g))$ can be forced by making $f - g$ is sufficiently small. 
We point that this cannot hold for Hilbert--Kunz multiplicity in Example~\ref{not constant example} and 
for F-signature in Example~\ref{fsig}.

Similar to a number of recent developments in the study of invariants in prime characteristic rings, the proof of Theorem~\ref{Intro theorem} comes by showing uniform convergence of continuous functions. We get our uniform convergence result by using the methods of Polstra and Tucker (\cite{PolstraTucker}), where it was shown that suitable Noether normalizations provide a kind of ``canonical'' convergence estimate. The essential reason is that when $R$ is a module-finite generically separable extension of a regular local ring $A$, then $R[A^{1/p}] \cong R^{\oplus p^d}$.  The ``canonicity'' in our construction comes from discriminants, and we show that the discriminant of $R/(f)$ does not ``change'' much when $f$ changes slightly (Lemma~\ref{Dis lemma}). This allows us to show that one can control the convergence rate of Hilbert--Kunz function uniformly, independent of a small perturbation (Theorem~\ref{main eHK thm}).

\section{Preliminary Results}

\begin{Definition}
Let $A$ be a ring and $R$ a finite $A$-algebra which is free as an $A$-module. Let $e_1, \ldots, e_n$ be a basis of $R$ as an $A$-module. The discriminant of $R$ is 
\[
\Dis_A(R) = \det \begin{pmatrix} 
\Trace (e_1^2) & \Trace (e_1 e_2) & \cdots & \Trace (e_1e_n) \\
\Trace (e_2e_1) & \Trace (e_2^2) & \cdots & \Trace (e_2e_n)\\
\vdots & \vdots & \cdots & \vdots \\
\Trace (e_ne_1) & \Trace (e_n e_2) & \cdots & \Trace (e_n^2)
\end{pmatrix},
\]
where $\Trace (r)$ denotes the trace of the multiplication map on $R$.
Up to multiplication by a unit of $A$, the discriminant is independent of choice of basis.
\end{Definition}

If $A$ is a regular local ring and $R$ is a finite Cohen-Macaulay extension of $A$, then $R$ is necessarily a free $A$-module by the Auslander-Buchsbaum formula, and  thus the discriminant of $R$ over $A$ is well-defined. We begin by recording some well-known and useful facts concerning discriminants.

\begin{Proposition}\label{Discriminant Properties} Let $A$ be a ring and $R$ a finitely generated $A$-algebra which is free as an $A$-module.
\begin{enumerate}
\item\label{Discriminant Properties Part 1} If $A$ is a normal domain, then $D_A(R)\in A$.
\item\label{Discriminant Properties Part 2} If $I$ is an ideal of $A$ then the image of $D_A(R)$ in $R/IR$ is $D_{A/I}(R/IR)$.
\item\label{Discriminant Properties Part 3} If $A$ is a domain then the ring $R$ is generically separable over $A$ if and only if $D_A(R)\not=0$.
\end{enumerate}
\end{Proposition}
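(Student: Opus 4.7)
The plan is to verify each part directly from the definition together with standard linear algebra over free modules; all three statements are well-known properties of the discriminant and the arguments are brief.

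For part~\eqref{Discriminant Properties Part 1}, since $R$ is free over $A$ with basis $e_1,\ldots,e_n$, for every $r\in R$ the multiplication-by-$r$ map is an $A$-linear endomorphism of a free $A$-module of finite rank, so its matrix trace (in any basis) lies in $A$. Applying this to $r = e_ie_j$ shows each entry of the defining matrix lies in $A$, hence so does $\Dis_A(R)$. In fact the normality hypothesis on $A$ is unnecessary once $R$ is assumed to be free over $A$; it is typically invoked only in the more general setting where one first defines the discriminant in $\operatorname{Frac}(A)$ and then wants to descend it to $A$.

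For part~\eqref{Discriminant Properties Part 2}, the key observation is that tensoring with $A/I$ over $A$ sends the basis $e_1,\ldots,e_n$ of $R$ to a basis $\bar e_1,\ldots,\bar e_n$ of $R/IR = R\otimes_A A/I$. Base change commutes with matrix representations of $A$-linear maps, so the matrix of multiplication by $e_i$ on $R$ in the chosen basis reduces modulo $I$ to the matrix of multiplication by $\bar e_i$ on $R/IR$. Consequently each trace $\Trace(e_ie_j)$ reduces to $\Trace(\bar e_i\bar e_j)$, and taking determinants yields the claim.

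For part~\eqref{Discriminant Properties Part 3}, I would base change to the field of fractions $K$ of $A$. Since forming the discriminant commutes with localization (a direct variant of (2), using that $e_1,\ldots,e_n$ remains a $K$-basis of $R\otimes_A K$), $\Dis_A(R)=0$ in $A$ if and only if $\Dis_K(R\otimes_A K)=0$ in $K$. Now $R$ is generically separable over $A$ precisely when the finite $K$-algebra $R\otimes_A K$ decomposes as a product of separable field extensions of $K$, and the classical criterion states that this holds if and only if the trace bilinear form is non-degenerate, i.e.\ if and only if $\Dis_K(R\otimes_A K)\neq 0$. This classical characterization of separability via non-degeneracy of the trace form is the only non-formal input, and no substantial obstacle arises; the remainder is bookkeeping on matrices of multiplication.
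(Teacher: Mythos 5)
Your proposal is correct and follows essentially the same route as the paper for parts (2) and (3): reduction of bases and traces modulo $I$, and base change to the fraction field followed by the trace-form criterion for separability. Two small points of comparison. For part (1) the paper simply cites a reference, whereas you give a direct argument: under the standing hypothesis that $R$ is free over $A$, each entry $\Trace(e_ie_j)$ is the trace of an $A$-linear endomorphism of a finite free $A$-module and so lies in $A$ automatically; your observation that normality is then superfluous is accurate (normality is only needed in the more general setting where $R$ is merely finite and torsion-free and the trace is a priori defined over $\operatorname{Frac}(A)$). For part (3) you invoke the full classical statement that a finite $K$-algebra is a product of separable field extensions if and only if its trace form is non-degenerate, while the paper unpacks this: it cites only the single-separable-field-extension case as known, extends to products by block-diagonality of the trace matrix, and proves the converse by exhibiting a zero row of the trace matrix coming from a basis element in the top nonzero power of the nilradical, then deducing reducedness and separability of each factor. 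Both are legitimate; the paper's version is more self-contained, yours defers one standard lemma to the literature.
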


\begin{proof}
We refer the reader to \cite[Page 200]{HochsterFTC} for a proof of (1). For (2), if $e_1,...,e_n\in R$ form a basis of $R$ as an $A$-module then the images of $e_1,...,e_n$ form a basis of $R/IR$ as an $A/IA$-module. Moreover, if $x\in R$ and $\overline{x}$ denotes the image of $x$ in $R/IR$, then $\Trace(\overline{x})=\overline{\Trace(x)}$. Hence the image of $\Dis_A(R)$ in $R/IR$ is indeed $\Dis_{A/I}(R/IR)$.

For (3), as a first step, let $L$ be the fraction field of $A$. We observe that $\Dis_A(R) = \Dis_L (R \otimes_A L)$, since the decomposition of an element $a = \sum a_i e_i$ over $A$ can be considered as a decomposition of $a \otimes 1 = \sum a_i e_i \otimes 1$ of an element $a\otimes 1$ over $L$ and $e_i \otimes 1$ still form a basis of $R \otimes_A L$ over $L$. Thus we can replace $R$ by $R\otimes_A L$ and assume $A = L$ is a field. It is well-known that $\Dis_L(R) \ne 0$ if $R$ is a finite separable field extension of $L$. By linearity this can be extended to the product and show one direction. 

For the other direction, we first show that $R$ is reduced. Since the nilradical is an ideal, it is a vector space over $L$, 
so we may extend a basis of it to a basis of $R$ over $L$.
If $x$ is a nilpotent element, then 
$\Trace (xe) = 0$ for any element $e \in R$, because the trace of a nilpotent matrix is nilpotent. 
This shows that $\Trace$ gives a degenerate bilinear form and $\Dis_L (R) = 0$. Since $R$ is reduced and finite over $L$, it is a direct sum of fields. Again, by linearity 
we can see that each of these fields must be separable over $L$. 
\end{proof}

Let $(R,\m,k)$ be a complete local ring of positive characteristic $p > 0$ with the residue field $k$. 
Cohen's structure theorem shows that there is an injection $\phi \colon k \to R$ such that the induced map to $R/\m$ is an isomorphism. We call $\phi (k)$ a coefficient field. Moreover, Cohen's structure theorem asserts that for any system of parameters $x_1, \ldots, x_d$ the $k$-subalgebra of $R$ generated by the system of parameters, $k[x_1, \dots, x_n]$,  is isomorphic to a polynomial ring. Since $R$ is complete, it therefore contains the completion $A = k[[x_1, \ldots, x_d]]$ and, since $x_1, \ldots, x_d$ form a system of parameters, $R$ is module-finite over $A$. Of course, there is a lot of freedom in choosing a system of parameters; the Cohen--Gabber Theorem asserts that $x_1,\ldots,x_d$ can be chosen so that $R$ is generically separable over $k[[x_1,\ldots,x_d]].$

\begin{Theorem}[{\cite{GabberOrgogozo}}\footnote{We refer the reader to (\cite{CohenGabber}) for an elementary proof of Theorem~\ref{Cohen-Gabber}.}]\label{Cohen-Gabber} Let $(R,\m,k)$ be a reduced complete local ring of equidimension $d$ and of prime characteristic $p$. There exists a system of parameters $x_1,...,x_d$ of $R$ and coefficient field $k\subseteq R$ such that $R$ is finite and generically separable over the power series ring $k[[x_1,\dots,x_d]]$.
\end{Theorem}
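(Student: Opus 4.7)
The plan is to combine Cohen's structure theorem with a $p$-basis manipulation that allows modifying the system of parameters so as to achieve generic separability, following Gabber's strategy. First I would apply the standard Cohen structure theorem to obtain a coefficient field $k \hookrightarrow R$ and \emph{some} system of parameters $y_1, \ldots, y_d$ such that $R$ is module-finite over $A_0 := k[[y_1, \ldots, y_d]]$. Since $R$ is reduced and equidimensional of dimension $d$, the minimal primes $\p_1, \ldots, \p_m$ of $R$ all satisfy $\dim R/\p_i = d$ and $\bigcap_i \p_i = 0$, so the total ring of fractions of $R$ equals the product $\prod_i K_i$ where $K_i := \mathrm{Frac}(R/\p_i)$. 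Each $K_i$ is a finite field extension of $L_0 := \mathrm{Frac}(A_0)$, and by Proposition~\ref{Discriminant Properties}(3) combined with the block-diagonal behaviour of the trace form over a product of fields, the extension $A_0 \hookrightarrow R$ is generically separable if and only if each individual extension $K_i / L_0$ is separable.

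Next I would try to perturb $y_1, \ldots, y_d$ to new parameters $x_j := y_j + h_j$, with $h_j \in \m^N$ for some suitably large $N$, so that each $K_i$ becomes separable over $\mathrm{Frac}(k[[x_1, \ldots, x_d]])$. The admissibility of such a perturbation --- meaning that the $x_j$ still form a system of parameters and $R$ still remains module-finite over $k[[x_1, \ldots, x_d]]$ --- is automatic from $x_j - y_j \in \m^2$, which guarantees that $(x_1, \ldots, x_d)$ remains $\m$-primary and thus that $k[[x_1, \ldots, x_d]]$ is again a valid Cohen subring. The real content of the theorem is to engineer separability at every minimal prime \emph{simultaneously} after such a perturbation.

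The hard part is this perturbation argument itself. A convenient test in characteristic $p$ is that a $p$-basis of $L_0$ (for instance $\{y_1, \ldots, y_d\}$ together with a $p$-basis of $k$) should remain $p$-independent after being pushed into $K_i$; equivalently, the natural map $\Omega^1_{L_0/\F_p} \otimes_{L_0} K_i \to \Omega^1_{K_i/\F_p}$ should be injective. I would show that the ``bad'' locus of perturbations --- those failing this $p$-independence at some $\p_i$ --- is contained in a proper closed subset of the space of perturbations, and then invoke a prime-avoidance style argument (possibly after enlarging $k$ if the residue field is too small) to pick a single choice of $(h_1, \ldots, h_d)$ avoiding all the bad loci at once. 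This linear-algebra-on-differentials argument, combined with Nakayama to lift from the residue field to the complete ring, is the technical heart of the proof and the step where Gabber's insight is genuinely needed; everything else reduces to Cohen's theorem and the multiplicative behaviour of discriminants over products.
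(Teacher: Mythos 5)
First, note that the paper offers no proof of this statement: it is the Cohen--Gabber theorem, quoted from \cite{GabberOrgogozo} with a footnote pointing to an elementary proof in \cite{CohenGabber}. So your proposal can only be judged on its own terms, and it has a genuine gap. The decisive flaw is the decision to fix the coefficient field once and for all via Cohen's structure theorem and then only perturb the system of parameters. When $k$ is imperfect this cannot succeed. Take $k=\F_p(t)$ and $R=k[[x,y]]/(y^p-tx^p)$, a one-dimensional reduced complete local domain with residue field $k$. Its fraction field is $K=k((x))(y/x)=k(t^{1/p})((x))$, so $[K:K^p]=p^2$, while the obvious coefficient field $\F_p(t)\subseteq R$ satisfies $t=(y/x)^p\in K^p$. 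Hence for \emph{every} parameter $x_1$ the compositum $K^p\cdot\operatorname{Frac}(k[[x_1]])=K^p(t,x_1)=K^p(x_1)$ has degree at most $p$ over $K^p$ and can never equal $K$; by MacLane's criterion $K$ is inseparable over $\operatorname{Frac}(k[[x_1]])$ for every choice of parameter. Separability is only restored after changing the coefficient field --- for instance the coefficient field $\F_p(t+y)$ works with the parameter $x$, since $K^p(t+y,x)=K$. This is exactly why the coefficient field appears as part of the conclusion of the theorem: it is data to be constructed, not input inherited from Cohen's theorem. Your $p$-independence test is the right one, but the ``space of perturbations'' must include perturbations of the lifts of a $p$-basis of $k$, not just of the $x_j$.

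Two further problems. The claim that $x_j-y_j\in\m^2$ automatically preserves the system-of-parameters property is false: in $k[[x,y]]/(xy)$ the parameter $x+y^2$ perturbed by $-y^2\in\m^2$ becomes $x$, which lies in a minimal prime; one needs $N$ depending on the original parameters, as in Corollary~\ref{stays sop}. And ``enlarging $k$ if the residue field is too small'' is not available, since a coefficient field must map isomorphically onto $R/\m$; the finite-residue-field case is a known genuine difficulty in proofs of this theorem. Finally, the step you yourself identify as the technical heart --- producing one perturbation making every $K_i$ separable simultaneously --- is only announced, not carried out, so even apart from the issues above what you have is a plan rather than a proof.
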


It is useful to keep in mind that that generic separability is equivalent to generic \'etaleness.

\section{Main Theorem and its applications}\label{Main Results}

We say that a sequence $f_1, \ldots, f_c \in \m$ is a parameter sequence if $\dim R/(f_1, \ldots, f_{i + 1}) < \dim R/(f_1, \ldots, f_i)$ for all $i$. We use $\ul{f}$ to denote a sequence of elements $f_1, \ldots, f_c$. If $I$ is an ideal, we say $\ul{f} \in \pow I c$ if each element is in $I$. We add sequences component-wise, so $\ul{f} + \ul{g}$ is the sequence $f_1 + g_1, \cdots, f_c + g_c$.
We will use $\length (M)$ to denote the length of a module, and $\mu (M)$ to denote the minimal number of generators.

We borrow the following property observed in \cite[Lemma~1]{SrinivasTrivedi}.

\begin{Lemma}\label{Basic}
Let $(R,\m,k)$ be a local ring and $\ul{f} \in \pow{\m} c $ be a parameter sequence. Then for any ideal $I$ such that $\length (R/(I, \ul{f}))$ is finite, there exists $N > 0$ such that for all $\ul{\epsilon} \in \pow{\m^N} c$
\[
(I, \ul{f}) = (I, \ul{f} + \ul{\epsilon}).
\]
In particular, $\length (R/(I, \ul{f})) = \length (R/(I, \ul{f}+ \ul{\epsilon})$ for all $\ul{\epsilon} \in \pow{\m^N} c$.
\end{Lemma}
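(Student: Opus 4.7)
The plan is to reduce the statement to a single application of Nakayama's lemma, with the finite length hypothesis supplying the $\m$-primary control we need. The first point to notice is that the parameter sequence hypothesis on $\ul{f}$ plays no role in the proof: all that is used is that $\length(R/(I,\ul f))<\infty$, which forces $(I,\ul f)$ to be $\m$-primary. Pick $M$ with $\m^M\subseteq (I,\ul f)$ and then set $N=M+1$.

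One inclusion is immediate: for $\ul\epsilon\in\pow{\m^N}c$, each $\epsilon_i\in\m^N\subseteq(I,\ul f)$, so $f_i+\epsilon_i\in(I,\ul f)$ and hence $(I,\ul f+\ul\epsilon)\subseteq(I,\ul f)$. The substance of the argument is the reverse inclusion. Write $J=(I,\ul f+\ul\epsilon)$. The trivial identity $f_i=(f_i+\epsilon_i)-\epsilon_i$ shows that $f_i\equiv-\epsilon_i\pmod J$, hence
\[
(I,\ul f)\subseteq J+(\epsilon_1,\ldots,\epsilon_c)\subseteq J+\m^N.
\]
Combining this with $\m^M\subseteq(I,\ul f)$ and $\m^N=\m^{M+1}=\m\cdot\m^M$ yields
\[
\m^M\subseteq J+\m\cdot\m^M,
\]
so the finitely generated $R$-module $(\m^M+J)/J$ equals $\m\cdot(\m^M+J)/J$.

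Nakayama's lemma applied over the local ring $(R,\m)$ then forces $(\m^M+J)/J=0$, that is, $\m^M\subseteq J$. The proof finishes at once: each $\epsilon_i$ now lies in $\m^N\subseteq\m^M\subseteq J$, and since $f_i+\epsilon_i\in J$ by construction, subtraction gives $f_i\in J$ for every $i$, whence $(I,\ul f)\subseteq J$. The ``in particular'' statement about lengths is then trivial since the two ideals agree.

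I do not foresee a real obstacle in this argument; the only point requiring a little care is choosing $N$ large enough (here $N=M+1$ is enough) so that $\m^N\subseteq\m\cdot\m^M$ and Nakayama can be applied to the quotient module $(\m^M+J)/J$. It is worth noting that the parameter sequence hypothesis is redundant for this particular lemma and is presumably included only because it is the setting in which the result will later be invoked.
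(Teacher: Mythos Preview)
Your proof is correct and follows the same idea as the paper's: both use that $(I,\ul{f})$ is $\m$-primary to absorb the perturbations $\epsilon_i$. The paper's version is terser---it simply takes $N$ with $\m^N\subseteq(I,\ul{f})$ and asserts the equality of ideals---whereas you take $N=M+1$ and make the Nakayama step explicit; your observation that the parameter-sequence hypothesis is not actually used in the argument is also correct.
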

\begin{proof}
Take $N$ such that $\m^N \subseteq (I, \ul{f})$. Then $(I, \ul{f}) = (I, \ul{f} + \ul{\epsilon})$
for all $\ul{\epsilon} \in \pow{\m^N} c$.
\end{proof}

As observed in \cite[Lemma~2]{SrinivasTrivedi}, 
a small enough perturbation of a system of parameters remains  a system of parameters.

\begin{Corollary}\label{stays sop}
Let $(R,\m,k)$ be a local ring. Suppose $\ul{f} \in \pow{\m} c$ is a part of a system of parameters. 
Then there exists $N\in\N$ such that for each $\ul{\epsilon} \in \pow{\m^N} c$, 
$\ul{f}+\ul{\epsilon}$ is a part of a system of parameters.
\end{Corollary}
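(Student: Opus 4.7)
The plan is to reduce the assertion to Lemma~\ref{Basic} by completing the partial system $\ul{f}$ to a full system of parameters and then perturbing only the first $c$ entries. Concretely, set $d = \dim R$. Since $\ul{f}$ is part of a system of parameters, we have $\dim R/(\ul{f}) = d - c$, so there exist elements $g_1, \ldots, g_{d-c} \in \m$ for which $f_1, \ldots, f_c, g_1, \ldots, g_{d-c}$ is a full system of parameters; in particular, the ideal $J := (\ul{f}, g_1, \ldots, g_{d-c})$ is $\m$-primary.

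Next, I would apply Lemma~\ref{Basic} with parameter sequence $\ul{f}$ and ideal $I = (g_1, \ldots, g_{d-c})$. The hypothesis of that lemma is satisfied because $(I, \ul{f}) = J$ is $\m$-primary, hence of finite colength. The lemma produces an integer $N > 0$ such that for every $\ul{\epsilon} \in \pow{\m^N}{c}$,
\[
(g_1, \ldots, g_{d-c}, \ul{f} + \ul{\epsilon}) = (g_1, \ldots, g_{d-c}, \ul{f}) = J,
\]
which is still $\m$-primary.

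Finally, I would conclude as follows: the $d$ elements $f_1 + \epsilon_1, \ldots, f_c + \epsilon_c, g_1, \ldots, g_{d-c}$ generate an $\m$-primary ideal in the $d$-dimensional local ring $R$, so they form a system of parameters. Hence $\ul{f} + \ul{\epsilon}$ is a part of a system of parameters, as desired. The argument is essentially immediate from Lemma~\ref{Basic}, and I do not anticipate any genuine obstacle; the only point that deserves mention is the elementary fact that any partial system of parameters in a local ring extends to a full one, which justifies the choice of $g_1, \ldots, g_{d-c}$.
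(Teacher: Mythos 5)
Your argument is exactly the paper's proof: complete $\ul{f}$ to a full system of parameters, apply Lemma~\ref{Basic} with $I$ generated by the added elements, and conclude that the perturbed $d$ elements still generate an $\m$-primary ideal. It is correct as written.
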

\begin{proof}
Let $d$ be the dimension of the ring.
We may complete $\ul{f}$ to a full system of parameters $\ul{f}, y_1, \ldots, y_{d-c}$. 
Setting $I = (y_1, \ldots, y_{d-c})$ in the previous lemma we obtain $N$ such that for any $\ul{\epsilon} \in \pow{\m^N} c$
\[
(f_1, \ldots, f_c, y_1, \ldots, y_{d - c}) = (f_1 + \epsilon_1, \ldots, f_c + \epsilon_c, y_1, \ldots, y_{d - c}).
\]
In particular $f_1 + \epsilon_1, \ldots, f_c + \epsilon_c, y_1, \ldots, y_{d - c}$ are $d$ elements 
that generate an $\m$-primary ideal, so they form a system of parameters.
\end{proof}

\begin{Lemma}\label{Dis lemma}
Let $(R, \mathfrak m, k)$ be a complete Cohen-Macaulay local ring with coefficient field $k\subseteq R$ and let 
$f_1, \ldots, f_c, x_1, \ldots x_{d-c}$ be a system of parameters for $R$. Let $A = k[[T_1, \ldots, T_{d-c}]]$ be the power series ring in $d-c$ variables over the field $k$. 
There exists $C \geq 0$ such that for all $n \geq 1$ and all 
$\ul{\epsilon} \in \pow{\mathfrak m^{Cn}} c$, 
\[f_1 + \epsilon_1, \ldots, f_{c} + \epsilon_c, x_{1}, \ldots, x_{d-c}\] 
is a system of parameters and
\[
\Dis_A (R/(\ul{f})) \equiv \Dis_A (R/(\ul{f} + \ul{\epsilon})) \pmod{\mathfrak m_A^n},
\]
where $R/(\ul{f})$ and $R/(\ul{f} + \ul{\epsilon})$ are $A$-algebras by mapping $T_i \mapsto x_i$ and identifying the coefficient field of $A$ with the induced coefficient fields of $R/(\ul{f})$ and $R/(\ul{f}+\ul{\epsilon})$.
\end{Lemma}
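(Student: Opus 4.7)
The plan is to use Proposition~\ref{Discriminant Properties}(2) to convert the congruence of discriminants modulo $\m_A^n$ into an equality of two finite-length $A/\m_A^n$-algebras, which I will then produce using Lemma~\ref{Basic}. Since $\ul f$ is a partial system of parameters in the Cohen-Macaulay ring $R$, it is a regular sequence, so $R/(\ul f)$ is Cohen-Macaulay of dimension $d-c$ and module-finite over the regular ring $A$ (via $T_i \mapsto x_i$). The Auslander-Buchsbaum formula therefore makes $R/(\ul f)$ a free $A$-module, and Proposition~\ref{Discriminant Properties}(1) confirms that $\Dis_A(R/(\ul f))$ is a well-defined element of $A$. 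Provided $C$ is chosen large enough, Corollary~\ref{stays sop} guarantees that $f_1+\epsilon_1,\ldots,f_c+\epsilon_c,x_1,\ldots,x_{d-c}$ remains a system of parameters, so the same reasoning applies to $R/(\ul f + \ul \epsilon)$.

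Next, apply Proposition~\ref{Discriminant Properties}(2) with the ideal $\m_A^n\subseteq A$; since the scalar action of $A$ on $R/(\ul f)$ identifies $\m_A^n\cdot (R/(\ul f))$ with $(x_1,\ldots,x_{d-c})^n$ modulo $(\ul f)$, this yields
\[
\Dis_A(R/(\ul f)) \bmod \m_A^n \;=\; \Dis_{A/\m_A^n}\bigl(R/((\ul f) + (x_1,\ldots,x_{d-c})^n)\bigr),
\]
and analogously for $\ul f + \ul \epsilon$. It therefore suffices to show the ideal equality
\[
(\ul f) + (x_1,\ldots,x_{d-c})^n \;=\; (\ul f + \ul \epsilon) + (x_1,\ldots,x_{d-c})^n
\]
in $R$, because the two quotient rings will then be literally identical (as $A/\m_A^n$-algebras, both inheriting the coefficient field and the action of the $T_i$ from $R$), so their discriminants coincide. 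This ideal equality is delivered by Lemma~\ref{Basic} applied with $I = (x_1,\ldots,x_{d-c})^n$, provided $\ul \epsilon \in \pow{\m^N}{c}$ for some $N$ with $\m^N \subseteq (\ul f) + (x_1,\ldots,x_{d-c})^n$.

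The heart of the argument is to make $N$ grow linearly in $n$ with a universal constant. Because $(\ul f, x_1,\ldots,x_{d-c})$ is $\m$-primary, there exists $M$ with $\m^M \subseteq (\ul f, x_1,\ldots,x_{d-c})$, and then
\[
\m^{Mn} \subseteq (\ul f, x_1,\ldots,x_{d-c})^n \subseteq (\ul f) + (x_1,\ldots,x_{d-c})^n,
\]
so setting $C := M$ (enlarged, if needed, to absorb the bound from Corollary~\ref{stays sop} at $n=1$) does the job. The main subtlety is precisely this uniformity in $n$: a direct invocation of Lemma~\ref{Basic} only produces an opaque $N$ for each $n$ separately, while the statement demands a single universal $C$; the $\m$-primary trick above is what converts the per-$n$ estimate into the required linear bound $Cn$.
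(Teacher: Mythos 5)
Your argument follows the same route as the paper's proof: reduce modulo $\m_A^n$, identify $\m_A^n\cdot(R/(\ul f))$ with $((x_1,\ldots,x_{d-c})^n+(\ul f))/(\ul f)$, establish the ideal equality $(\ul f)+(x_1,\ldots,x_{d-c})^n=(\ul f+\ul\epsilon)+(x_1,\ldots,x_{d-c})^n$, and invoke Proposition~\ref{Discriminant Properties}(\ref{Discriminant Properties Part 2}). Your uniformity trick, $\m^{Cn}\subseteq(\ul f,x_1,\ldots,x_{d-c})^n\subseteq(\ul f)+(x_1,\ldots,x_{d-c})^n$, is exactly how the paper obtains the linear bound $Cn$, so the strategy and the key estimate are the same.

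The one step you elide is the basis-dependence of the discriminant. Each of $\Dis_A(R/(\ul f))$ and $\Dis_A(R/(\ul f+\ul\epsilon))$ is well defined only up to a unit multiple (the square of the determinant of a change of basis), so the inference ``the two quotients modulo $\m_A^n$ are literally the same ring, hence their discriminants coincide'' is incomplete: the bases induced on that common quotient from a basis of $R/(\ul f)$ over $A$ and from a basis of $R/(\ul f+\ul\epsilon)$ over $A$ need not agree, and the two resulting discriminants could a priori differ by a unit of $A/\m_A^n$. The paper closes this by fixing elements $r_1,\ldots,r_n\in R$ whose images give a basis of $R/(\ul f)$ over $A$ and observing that, for $\ul\epsilon\in\pow{\m^{C}}{c}$, their images also give a basis of $R/(\ul f+\ul\epsilon)$ over $A$; computing both discriminants with respect to this common lifted basis makes the trace matrices, and hence the determinants, agree on the nose modulo $\m_A^n$. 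This is a short addition rather than a flaw in your approach, but without it the congruence as stated is not actually established (and, strictly speaking, not even well posed).
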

\begin{proof}
We can choose $C$ as in Lemma~\ref{Basic}, so for all $\ul{\epsilon} \in \pow {\mathfrak \m^C} c$ we have
$(\ul{f} + \ul{\epsilon}, x_1, \ldots, x_{d-c}) = (\ul{f}, x_1, \ldots, x_{d-c})$ and thus $\ul{f}+\ul{\epsilon}, x_1, \ldots, x_{d-c}$ is a system of parameters. 
Hence $R/(\ul{f})$ and $R/(\ul{f} + \ul{\epsilon})$ are free $A$-modules of the same rank
$\length_A (R/(\ul{f}, x_1, \ldots, x_{d-c}))$.  
Moreover, if $r_1,\ldots,r_n\in R$ are chosen so that their images in $R/(\ul{f})$ serve as a basis for $R/(\ul{f})$ as an $A$-module, then for each $\ul{\epsilon}\in \pow {\m^C} c$ then,
because the images of $r_1, \ldots, r_n$ form a basis of 
$R/(\ul{f} + \ul{\epsilon}, \m_A) = R/(\ul{f}, x_1, \ldots, x_{d-c})$,
the images of $r_1,\ldots,r_n$ in $ R/(\ul{f}+\ul{\epsilon})$ are a basis of $R/(\ul{f} + \ul{\epsilon})$ too.

Observe that for each $n\geq 1$, $\mathfrak m^{Cn} \subseteq (x_1, \ldots, x_{d-c})^n + (\ul{f})$. 
Thus for each $\ul{\epsilon} \in \pow{\m^{Cn}} c$, 
$R/(\ul{f}, \mathfrak m_A^n) = R/(\ul{f} + \ul{\epsilon}, \mathfrak m_A^n)$ as $A/\mathfrak m_A^n$-algebras.
Then by part (\ref{Discriminant Properties Part 2}) of Lemma~\ref{Discriminant Properties}
\begin{align*}
\Dis_A (R/(\ul{f})) \mod \mathfrak m_A^nR/(\ul{f}) &= \Dis_{A/\mathfrak m_A^n} (R/(\ul{f}, \mathfrak m_A^n))\\
&= \Dis_{A/\mathfrak m_A^n} (R/(\ul{f}+\ul{\epsilon}, \mathfrak m_A^n))\\
& = \Dis_A (R/(\ul{f} + \ul{\epsilon})) \mod \mathfrak m_A^nR/(\ul{f} + \ul{\epsilon}),
\end{align*}
where the middle equality holds since we have a common basis $r_1,\ldots,r_n\in R$ 
for  $R/(\ul{f}+\ul{\epsilon})$ and $R/(\ul{f})$ as $A$-modules.
\end{proof}

Suppose $(R,\m,k)$ satisfies the hypotheses of Lemma~\ref{Dis lemma}. 
The following corollary shows that if $f\in\m$ is a parameter element and $R/(f)$ is generically separable over a regular ring $A$, then $R/(g)$ is generically separable over the same regular local ring $A$, provided $g$ is a small enough perturbation of $f$.

\begin{Corollary}\label{separability cor}
Let $(R, \mathfrak m, k)$ be a complete Cohen-Macaulay local ring with coefficient field $k\subseteq R$. 
Let $f_1, \ldots, f_c, x_1, \ldots, x_{d-c}$ be a system of parameters for $R$. Let $A = k[[T_1, \ldots, T_{d-c}]]$ be the power series ring in $d-c$ variables over the field $k$ and  
consider  $R/(\ul{f})$ is an $A$-algebra by mapping $T_i\mapsto x_i$ and identifying the coefficient field of $A$ with the induced coefficient field of $R/(\ul{f})$. Suppose that $A \subseteq R/(\ul{f})$ is generically separable. Then there exists $N\geq 1$ such that for any 
$\ul{\epsilon} \in \pow{\mathfrak m^N} c$, $A \subseteq R/(\ul{f} + \ul{\epsilon})$ is generically separable 
as an $A$-algebra given by mapping $T_i\mapsto x_i$ and identifying the coefficient field of $A$ with the induced coefficient field of $R/(\ul{f} + \ul{\epsilon})$.
\end{Corollary}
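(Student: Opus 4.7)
The plan is to leverage Lemma~\ref{Dis lemma} together with the characterization of generic separability via non-vanishing of the discriminant given by Proposition~\ref{Discriminant Properties}(\ref{Discriminant Properties Part 3}). The key observation is that generic separability is a Zariski-open condition on the discriminant (namely, being nonzero), and since the discriminants of $R/(\ul{f})$ and $R/(\ul{f}+\ul{\epsilon})$ can be forced to agree modulo arbitrarily high powers of $\m_A$, a nonzero discriminant for the original sequence forces a nonzero discriminant for any sufficiently small perturbation.

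First, I would observe that $A = k[[T_1, \ldots, T_{d-c}]]$ is a regular local ring and in particular a normal domain, so by Proposition~\ref{Discriminant Properties}(\ref{Discriminant Properties Part 1}), $\Dis_A(R/(\ul{f}))$ is an element of $A$. Next, since $A \subseteq R/(\ul{f})$ is generically separable by hypothesis, Proposition~\ref{Discriminant Properties}(\ref{Discriminant Properties Part 3}) gives that $\Dis_A(R/(\ul{f})) \neq 0$ in $A$. Because $A$ is a Noetherian local ring, Krull's intersection theorem supplies an integer $n \geq 1$ such that
\[
\Dis_A(R/(\ul{f})) \notin \m_A^n.
\]

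Now I invoke Lemma~\ref{Dis lemma} to obtain the constant $C \geq 0$ from that lemma, and set $N = Cn$. For any $\ul{\epsilon} \in \pow{\m^N}{c}$, Lemma~\ref{Dis lemma} guarantees both that $\ul{f}+\ul{\epsilon}, x_1, \ldots, x_{d-c}$ remains a system of parameters (so $R/(\ul{f}+\ul{\epsilon})$ is a free $A$-module and the discriminant $\Dis_A(R/(\ul{f}+\ul{\epsilon}))$ is defined), and that
\[
\Dis_A(R/(\ul{f}+\ul{\epsilon})) \equiv \Dis_A(R/(\ul{f})) \pmod{\m_A^n}.
\]
Since the right-hand side is not in $\m_A^n$, neither is the left-hand side; in particular $\Dis_A(R/(\ul{f}+\ul{\epsilon})) \neq 0$ in $A$. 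Applying the other direction of Proposition~\ref{Discriminant Properties}(\ref{Discriminant Properties Part 3}) yields that $A \subseteq R/(\ul{f}+\ul{\epsilon})$ is generically separable, as required.

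I do not expect any serious obstacle: once Lemma~\ref{Dis lemma} is in hand, this is essentially an openness-of-nonvanishing argument in the $\m_A$-adic topology on $A$. The only subtlety to keep track of is that the discriminant lives in $A$ (requiring normality of $A$, which holds) so that Krull's intersection theorem can be applied to detect a single truncation modulus $n$ that certifies the nonvanishing.
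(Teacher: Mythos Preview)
Your proposal is correct and follows essentially the same approach as the paper's proof: use Proposition~\ref{Discriminant Properties} to translate generic separability into nonvanishing of the discriminant, apply Krull's intersection theorem to find an $n$ with $\Dis_A(R/(\ul{f}))\notin\m_A^n$, and then invoke Lemma~\ref{Dis lemma} with $N=Cn$ to force $\Dis_A(R/(\ul{f}+\ul{\epsilon}))\notin\m_A^n$. If anything, your citations are slightly more precise than the paper's, which appeals to Part~(\ref{Discriminant Properties Part 1}) where Part~(\ref{Discriminant Properties Part 3}) is the relevant equivalence.
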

\begin{proof}
The assumption $ R/(\ul{f})$ is generically separable over $A$ is equivalent to $\Dis_A(R/(\ul{f}))\not=0$ by Part~(\ref{Discriminant Properties Part 1}) of Lemma~\ref{Discriminant Properties}.  Let $C\geq 0$ as given by Lemma~\ref{Dis lemma}. By Krull's intersection theorem we can choose $n\geq 1$ so that $\Dis_A(R/(\ul{f}))\not\equiv 0\mod \m^n_A$. Then for any $\ul{\epsilon} \in \pow{\m^{Cn}} c$, $\Dis_A(R/(\ul{f}+\ul{\epsilon}))\equiv\Dis_A(R/(\ul{f}))\mod \m^n_A\not=0$,  so $\Dis_A(R/(\ul{f}+\ul{\epsilon}))\not=0$. \end{proof}

\subsection{Continuity of Hilbert--Kunz multiplicity}

It is time to set up the uniform convergence machinery. We closely follow \cite[Lemma~2.3, Theorem~3.2]{PolstraTucker}.

Let $(R, \m, k)$ be an F-finite local ring and $I$ be an $\m$-primary ideal. 
Then 
$F_*^e R \otimes_R R/I \cong F_*^e R/I^{[p^e]}$ and we may compute
\[
\length_R (F_* R \otimes_R R/I) = [k : k^{p}] \length_R (R/I^{[p]}).
\]

\begin{Theorem}\label{main eHK thm}
Let $(R, \mathfrak m,k)$ be an F-finite Cohen-Macaulay local ring of prime characteristic $p$ and of dimension $d+c$. 
If $\ul{f} = f_1, \ldots, f_c$ is a parameter sequence such that $\hat{R}/(\ul{f})\hat{R}$ is reduced,  then there exists a constant $C \geq 0$ and an integer $N\geq 1$ such that for any $e\geq 1$ and  $\ul{\epsilon} \in \pow {\mathfrak m^N}{c}$
\[
\left |\length \left (R/(\ul{f} + \ul{\epsilon}, \mathfrak m^{[p^{e+1}]}) \right ) - 
p^{d} \length \left (R/(\ul{f} + \ul{\epsilon}, \m^{[p^e]}) \right) \right| \leq C p^{e(d-1)}.
\]
\end{Theorem}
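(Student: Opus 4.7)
The plan is to reduce to the complete case, apply the Cohen--Gabber theorem to choose an appropriate Noether normalization, and then adapt the Polstra--Tucker uniform convergence argument \cite[Lemma~2.3, Theorem~3.2]{PolstraTucker} to the family $\{R/(\underline{f}+\underline{\epsilon})\}_{\underline{\epsilon}}$, using the discriminant control of Lemma~\ref{Dis lemma} to make the constant uniform in $\underline{\epsilon}$. Passing to $\hat R$ preserves all lengths in sight and keeps $R/(\underline{f})$ reduced, so I assume $R$ is complete. Theorem~\ref{Cohen-Gabber} applied to $R/(\underline{f})$ supplies a coefficient field $k \subseteq R/(\underline{f})$ and a system of parameters $\bar x_1,\ldots,\bar x_d$ of $R/(\underline{f})$ such that $A := k[[x_1,\ldots,x_d]] \subseteq R/(\underline{f})$ is module-finite and generically separable; lifting the $\bar x_i$ to $x_i \in R$ and choosing a compatible coefficient field of $R$ makes $f_1,\ldots,f_c,x_1,\ldots,x_d$ a full system of parameters of $R$.

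Combining Lemma~\ref{Basic}, Corollary~\ref{stays sop}, Lemma~\ref{Dis lemma}, and Corollary~\ref{separability cor}, and invoking Krull's intersection theorem to fix $n_0 \geq 1$ with $\Dis_A(R/(\underline{f})) \notin \m_A^{n_0}$, I choose $N \geq 1$ so large that for every $\underline{\epsilon} \in \pow{\m^N}{c}$ and $S_\epsilon := R/(\underline{f}+\underline{\epsilon})$ the following all hold: $f_1+\epsilon_1,\ldots,f_c+\epsilon_c,x_1,\ldots,x_d$ remains a system of parameters for $R$; $A \subseteq S_\epsilon$ is module-finite and generically separable; $S_\epsilon$ is $A$-free of a common rank $r$ independent of $\underline{\epsilon}$; and $\Dis_A(S_\epsilon) \equiv \Dis_A(S_0) \pmod{\m_A^{n_0}}$.

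The core estimate uses the canonical $S_\epsilon$-linear map $\phi_\epsilon\colon S_\epsilon\otimes_A A^{1/p}\to S_\epsilon^{1/p}$. Since $A^{1/p}$ is $A$-free of rank $p^d[k:k^p]$, the source is $S_\epsilon$-isomorphic to $S_\epsilon^{\oplus p^d[k:k^p]}$ and hence
\[
\length_{S_\epsilon}\!\bigl(S_\epsilon\otimes_A A^{1/p}/\m^{[p^e]}(S_\epsilon\otimes_A A^{1/p})\bigr) \,=\, p^d[k:k^p]\,\length_{S_\epsilon}(S_\epsilon/\m^{[p^e]}),
\]
while the target, reinterpreted as $F_*S_\epsilon$, satisfies
\[
\length_{S_\epsilon}\!\bigl(S_\epsilon^{1/p}/\m^{[p^e]}S_\epsilon^{1/p}\bigr) \,=\, [k:k^p]\,\length_{S_\epsilon}(S_\epsilon/\m^{[p^{e+1}]}).
\]
Generic separability forces $\ker\phi_\epsilon$ and $\coker\phi_\epsilon$ to be killed by a power of $\Dis_A(S_\epsilon)$ and so supported in dimension at most $d-1$; the four-term exact sequence for $\phi_\epsilon$ then equates the desired difference (after dividing by $[k:k^p]$) to $\length(\coker\phi_\epsilon/\m^{[p^e]}\coker\phi_\epsilon) - \length(\ker\phi_\epsilon/\m^{[p^e]}\ker\phi_\epsilon)$, each term of size $O(p^{e(d-1)})$ for each fixed $\underline{\epsilon}$.

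The main obstacle, and the role played by the discriminant congruence, is to produce a constant $C$ that is independent of $\underline{\epsilon}$. The generator counts $\mu(\ker\phi_\epsilon) \leq p^d[k:k^p]$ and $\mu(\coker\phi_\epsilon) \leq \mu(S_\epsilon^{1/p}) = [k:k^p]\length(S_\epsilon/\m^{[p]})$ are already uniformly bounded in $\underline{\epsilon}$ by Lemma~\ref{Basic} applied with $I = \m^{[p]}$. The congruence $\Dis_A(S_\epsilon) \equiv \Dis_A(S_0) \pmod{\m_A^{n_0}}$ yields $(\Dis_A(S_\epsilon)) + \m_A^{n_0} = (\Dis_A(S_0)) + \m_A^{n_0}$ as ideals of $A$, so the annihilators of $\ker\phi_\epsilon$ and $\coker\phi_\epsilon$ contain a fixed $\epsilon$-independent ideal of $A$ cutting out a subscheme of dimension $\leq d-1$. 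Invoking a Polstra--Tucker-style uniform Hilbert--Kunz estimate for lower-dimensional modules over the family $\{S_\epsilon\}$, with inputs the uniform generator counts, the common $A$-rank $r$, and the controlled annihilator, produces the desired $\underline{\epsilon}$-independent constant $C$ and completes the proof.
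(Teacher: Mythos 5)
Your setup matches the paper's: complete, apply Cohen--Gabber to $R/(\ul f)$, lift the coefficient field, compare $S_\epsilon\otimes_A A^{1/p}$ with $S_\epsilon^{1/p}$, and use the discriminant to control the error modules. But the step you yourself identify as ``the main obstacle'' --- making the constant independent of $\ul\epsilon$ --- is where your argument breaks. The congruence $\Dis_A(S_\epsilon)\equiv\Dis_A(S_0)\pmod{\m_A^{n_0}}$ gives the equality of ideals $(\Dis_A(S_\epsilon))+\m_A^{n_0}=(\Dis_A(S_0))+\m_A^{n_0}$, but it does \emph{not} follow that $\Ann(\coker\phi_\epsilon)$ contains a fixed $\epsilon$-independent ideal: the annihilator contains the principal ideal $(c_\epsilon)$, which genuinely varies with $\epsilon$, and it certainly does not contain $\m_A^{n_0}$ (and $(c)+\m_A^{n_0}$ is $\m_A$-primary anyway, so it would not cut out dimension $d-1$). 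There is no fixed ideal contained in all the $(c_\epsilon)$. Consequently the final appeal to ``a Polstra--Tucker-style uniform estimate ... with the controlled annihilator'' has no valid input, and this is exactly the point of the theorem. The paper's fix is different and more concrete: complete $c=\Dis_A(R/(\ul f))$ to a system of parameters $c,t_1,\dots,t_{d-1}$ of $A$ and enlarge $N$ so that, by Lemma~\ref{Basic} together with Lemma~\ref{Dis lemma}, the $\m$-primary ideal $(\ul f+\ul\epsilon,c_\epsilon,t_1,\dots,t_{d-1})$ \emph{equals} the fixed ideal $(\ul f,c,t_1,\dots,t_{d-1})$. Then
\[
\length\bigl(R/(\ul f+\ul\epsilon,c_\epsilon,\m^{[p^e]})\bigr)\le \length\bigl(R/(\ul f+\ul\epsilon,c_\epsilon,t_1^{p^e},\dots,t_{d-1}^{p^e})\bigr)\le p^{e(d-1)}\length\bigl(R/(\ul f,c,t_1,\dots,t_{d-1})\bigr),
\]
which is the uniform $O(p^{e(d-1)})$ bound you need. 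So what is uniform is not the annihilator ideal but the colength of the varying ideal after adjoining fixed parameters; your argument is missing this device.

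A secondary imprecision: tensoring the four-term sequence for $\phi_\epsilon$ with $R/\m^{[p^e]}$ does not ``equate'' the difference of lengths to $\length(\coker/\m^{[p^e]}\coker)-\length(\ker/\m^{[p^e]}\ker)$, since tensor is only right exact; the four-term sequence yields only the inequality bounding $\length(S_\epsilon^{1/p}/\m^{[p^e]})$ from above. For the reverse inequality one needs a map in the opposite direction with small cokernel --- the paper uses multiplication by $c_\epsilon$ from $S_\epsilon^{1/p}$ into $S_\epsilon[A^{1/p}]$, giving a second short exact sequence with cokernel $L_\epsilon$ killed by $c_\epsilon$. This is standard and repairable, but as written your accounting of the two directions is not correct.
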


\begin{proof}
Without loss of generality, we may assume $R$ is complete. Because $R$ is Cohen-Macaulay, $R/(\ul{f})$ is equidimensional and reduced. By Theorem~\ref{Cohen-Gabber} there exists coefficient field $k\subseteq R/(\ul{f})$ and parameters $x_1,...,x_d\in R$ such that $x_1,...,x_d$ is a system of parameters for $R/(\ul{f})$ and $R/(\ul{f})$ is module-finite and generically separable over the regular local ring $A := k[[x_1,\ldots,x_d]]$.

We observe that the coefficient field can be lifted from $R/(\ul{f})$ to $R$. 
Essentially, the proof of \cite[Theorem~28.3]{MatsumuraNew}
shows that a coefficient field is determined by the lifts of a $p$-basis of $k$ 
(see also \cite[Theorem, page 12]{HochsterNotes}).
Thus we may just lift the $p$-basis to $R$.

By Lemma~6.5 of \cite{HHJAMS} (the proof still holds if we replace $R^\infty$ by $R^{1/p}$), 
we have an exact sequence
\[
0 \to R/(\ul{f})[A^{1/p}] \to (R/(\ul{f}))^{1/p} \to M \to 0
\]
where $0 \neq c = \Dis_A (R/(\ul{f}))$ annihilates $M$.

By Lemma~\ref{Dis lemma} and Corollary~\ref{separability cor},
we can find $N$ such that $A \subseteq R/(\ul{f} + \ul{\epsilon})$ is still generically separable.
Therefore we have an exact sequence
\[
0 \to R/(\ul{f} + \ul{\epsilon})[A^{1/p}] \to (R/(\ul{f} + \ul{\epsilon}))^{1/p} \to M_{\ul{\epsilon}} \to 0
\]
where $c_{\ul{\epsilon}} = \Dis_A (R/(\ul{f} + \ul{\epsilon}))$ annihilates $M_{\ul{\epsilon}}$ and 
$c_{\ul{\epsilon}} - c \in \mathfrak m_A^N$.

By \cite{Kunz1969} $A^{1/p}$ is a free $A$-module of rank $[k: k^p] p^d$, so 
tensoring the above sequence with $\m^{[p^e]}$
we get that
\[
[k: k^p] \length \left(R/(\ul{f} + \ul{\epsilon}, \m^{[p^{e + 1}]}) \right) - [k: k^p]p^{d} 
\length \left( R/(\ul{f} + \ul{\epsilon}, \m^{[p^e]}) \right) \leq 
\length \left(M_{\ul{\epsilon}}/\m^{[p^e]}M_{\ul{\epsilon}} \right).
\]
Since $M_{\ul {\epsilon}}$ is a $R/(\ul{f} + \ul{\epsilon},  c_{\ul{\epsilon}})$-module and 
a quotient of $(R/(\ul{f} + \ul{\epsilon}))^{1/p}$, 
\[
\length \left(M_{\ul{\epsilon}}/\m^{[p^e]}M_{\ul{\epsilon}} \right) 
\leq \length \left (R/(\ul{f} + \ul{\epsilon}, c_{\ul{\epsilon}}, \m^{[p^e]}) \right) \mu \left ((R/(\ul{f} + \ul{\epsilon}))^{1/p} \right).
\]
Furthermore, it easily follows from the inclusion $(x_1, \ldots, x_d) \subseteq \mathfrak m$ that
\[
\mu \left ((R/(\ul{f} + \ul{\epsilon}))^{1/p} \right) \leq \length \left( (R/(x_1^p, \ldots, x_d^p, \ul{f} + \ul{\epsilon}) )^{1/p} \right),
\] 
thus
\[
\mu \left ((R/(\ul{f} + \ul{\epsilon}))^{1/p} \right) \leq 
[k: k^p] \length \left( R/(x_1^p, \ldots, x_d^p, \ul{f} + \ul{\epsilon})  \right) \leq [k:k^p]p^d \length \left (R/(x_1, \ldots, x_d, \ul{f}) \right),
\]
where the last inequality follows by filtration.

Now, we can complete $0 \neq c$ to a system of parameters $c, t_1, \ldots, t_{d-1}$ in $A$ and,
after increasing $N$ if necessarily, by Lemma~\ref{Dis lemma} we can assume that 
$c_{\ul{\epsilon}}, t_1, \ldots, t_{d-1}$ is still a system of parameters and that
$(\ul{f}+\ul{\epsilon}, c_{\ul{\epsilon}}, t_1, \ldots, t_{d-1}) = (\ul{f}, c, t_1, \ldots, t_{d-1})$.
Therefore
\[
\length \left (R/(\ul{f} + \ul{\epsilon}, c_{\ul{\epsilon}}, \mathfrak m^{[p^e]}) \right) \leq 
\length \left (R/ (\ul{f}+\ul{\epsilon}, c_{\ul{\epsilon}}, t_1^{p^e}, \ldots, t_{d-1}^{p^e}) \right)
\leq p^{e(d-1)} \length \left (R/ (\ul{f}, c, t_1, \ldots, t_{d-1}) \right).
\]
Let
\[
C = p^d \length (R/ (\ul{f}, c, t_1, \ldots, t_{d-1})) \length (R/(x_1, \ldots, x_d, \ul{f}))
\]
and observe that we found one of the two required inequalities:
\[
\length \left (R/(\ul{f} + \ul{\epsilon}, \mathfrak m^{[p^{e+1}]}) \right ) - 
p^{d} \length \left (R/(\ul{f} + \ul{\epsilon}, \m^{[p^e]}) \right)  \leq C p^{e(d-1)}.
\] 

For the other inequality, we can repeat the proof for the exact sequence
\[
0 \to (R/(\ul{f}))^{1/p} \xrightarrow{c} R/(\ul{f})[A^{1/p}]  \to L \to 0
\]
where the map is multiplication by $c = \Dis_A (R/(\ul{f}))$.
Again the cokernel is annihilated by $c$, because $R/(\ul{f})[A^{1/p}] \subseteq (R/(\ul{f}))^{1/p}$. Moreover, we get for each $\ul{\epsilon}$ a short exact sequence
\[
0 \to (R/(\ul{f}+\ul{\epsilon}))^{1/p} \xrightarrow{c_{\ul{\epsilon}}} R/(\ul{f}+\ul{\epsilon})[A^{1/p}]  \to L_{\epsilon} \to 0.
\]
Most of the proof carries through as above, except that now we estimate
\[
\length \left (L_{\ul{\epsilon}}/\m^{[p^e]}L_{\ul{\epsilon}} \right)  
\leq  \mu \left ( R/(\ul{f})[A^{1/p}] \right) \length \left (R/(\ul{f} + \ul{\epsilon}, c_{\ul{\epsilon}}, \m^{[p^e]}) \right)
= [k:k^p]p^d \length \left (R/(\ul{f} + \ul{\epsilon}, c_{\ul{\epsilon}}, \m^{[p^e]}) \right)
\]
and from there obtain 
\[
p^{d} \length \left (R/(\ul{f} + \ul{\epsilon}, \m^{[p^e]}) \right)
- \length \left (R/(\ul{f} + \ul{\epsilon}, \mathfrak m^{[p^{e+1}]}) \right )  
\leq  \left( p^d[k : k^p] \length (R/ (\ul{f}, c, t_1, \ldots, t_{d-1})) \right)p^{e(d-1)}.
\]
\end{proof}

Following the treatment in \cite{TuckerF-sigExists}, we obtain the following corollary:

\begin{Corollary}\label{Corollary to main eHK thm}
Let $(R, \mathfrak m,k)$ be an F-finite Cohen-Macaulay local ring of prime characteristic $p$ and of dimension $d+c$. If $\ul{f}$ is a parameter sequence of length $c$ and such that $\hat{R}/(\ul{f})\hat{R}$ is reduced, then there exists a constant $C \geq 0$ and an integer $N\geq 1$ such that for any $e\geq 1$ and $\ul{\epsilon} \in \pow{\mathfrak m^N} c$
\[
\left |\ehk (R/(\ul{f}+\ul{\epsilon})) - p^{-ed} \length \left (R/(\ul{f} + \ul{\epsilon}, \m^{[p^e]}) \right) \right| \leq C p^{-e}.
\]
\end{Corollary}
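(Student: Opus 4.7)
The plan is that this corollary is an essentially formal consequence of Theorem~\ref{main eHK thm} via a telescoping geometric-series argument, in the style of \cite[Theorem~3.6]{TuckerF-sigExists}. The entire quantitative content has been packaged into Theorem~\ref{main eHK thm}; all that remains is to upgrade the one-step estimate into an estimate against the limit.

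First, I would, if necessary, enlarge $N$ so that Corollary~\ref{stays sop} applies to $\ul{f}$: this guarantees that for every $\ul{\epsilon}\in\pow{\m^N}{c}$, the sequence $\ul{f}+\ul{\epsilon}$ is still a parameter sequence in $R$, so $R/(\ul{f}+\ul{\epsilon})$ is Cohen--Macaulay of dimension exactly $d$. In particular the Hilbert--Kunz multiplicity is
\[
\ehk(R/(\ul{f}+\ul{\epsilon})) \;=\; \lim_{e\to\infty}\, p^{-ed}\,\length\!\left(R/(\ul{f}+\ul{\epsilon},\m^{[p^e]})\right),
\]
and the limit on the right is what the statement compares to.

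Next, I apply Theorem~\ref{main eHK thm} to obtain a constant $C'\geq 0$ and $N\geq 1$ (enlarging $N$ above if necessary) such that, uniformly in $e$ and in $\ul{\epsilon}\in\pow{\m^N}{c}$,
\[
\left|\length\!\left(R/(\ul{f}+\ul{\epsilon},\m^{[p^{e+1}]})\right) - p^d\length\!\left(R/(\ul{f}+\ul{\epsilon},\m^{[p^e]})\right)\right| \;\leq\; C'\,p^{e(d-1)}.
\]
Writing $a_e := p^{-ed}\length\!\left(R/(\ul{f}+\ul{\epsilon},\m^{[p^e]})\right)$ and dividing by $p^{(e+1)d}$ turns this into
\[
|a_{e+1}-a_e| \;\leq\; \frac{C'}{p^{d+e}}.
\]

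Finally, telescoping and summing a geometric series yields
\[
\left|\ehk(R/(\ul{f}+\ul{\epsilon})) - a_e\right| \;\leq\; \sum_{j=e}^{\infty} |a_{j+1}-a_j| \;\leq\; \frac{C'}{p^{d}}\sum_{j=e}^{\infty} p^{-j} \;=\; \frac{C'}{p^{d-1}(p-1)}\,p^{-e},
\]
so the desired estimate holds with $C := C'/(p^{d-1}(p-1))$. There is no serious obstacle beyond the bookkeeping described above: the heavy lifting, namely that $C'$ and $N$ can be chosen \emph{independently} of $\ul{\epsilon}$, is exactly what Theorem~\ref{main eHK thm} provides, and the only additional point to verify is that $\ul{f}+\ul{\epsilon}$ remains a parameter sequence so that the limit defining $\ehk(R/(\ul{f}+\ul{\epsilon}))$ is the correct one.
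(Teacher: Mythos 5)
Your proposal is correct and is exactly the argument the paper intends: the paper gives no written proof, simply citing the telescoping/geometric-series device of \cite[Theorem~3.6]{TuckerF-sigExists}, which is what you carry out, and your arithmetic (dividing the one-step estimate by $p^{(e+1)d}$ to get $|a_{e+1}-a_e|\leq C'p^{-(e+d)}$ and summing) checks out. The one point you add explicitly --- that $\ul{f}+\ul{\epsilon}$ remains a parameter sequence so that $\dim R/(\ul{f}+\ul{\epsilon})=d$ and the limit of $a_e$ really is $\ehk(R/(\ul{f}+\ul{\epsilon}))$ --- is also handled in the paper (inside Lemma~\ref{Dis lemma}), so nothing is missing.
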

\begin{proof}
Using induction on $q'$ as in the proof of \cite[Proposition~3.4]{TuckerF-sigExists} 
we may obtain from Theorem~\ref{main eHK thm} that
\[
\left |\length \left (R/(\ul{f} + \ul{\epsilon}, \mathfrak m^{[p^{e+e'}]}) \right ) - 
p^{e'd} \length \left (R/(\ul{f} + \ul{\epsilon}, \m^{[p^e]}) \right) \right| 
\leq C p^{(e + e' - 1)(d-1)}(1 + p + \cdots + p^{e' - 1}).
\]
Thus, dividing by $p^{(e + e')d}$ we get that
\[
\left | p^{-(e + e')d}\length \left (R/(\ul{f} + \ul{\epsilon}, \mathfrak m^{[p^{e+e'}]}) \right ) - 
p^{-ed} \length \left (R/(\ul{f} + \ul{\epsilon}, \m^{[p^e]}) \right) \right|
\leq C \frac{(1 + p + \cdots + p^{e' - 1})}{p^{d - 1} p^{e + e'}}.
\]
Note that
\[
C \frac{(1 + p + \cdots + p^{e' - 1})}{p^{d - 1} p^{e + e'}}
= \frac{C(p^{e'} - 1)}{(p - 1) p^{d - 1} p^{e'}} p^{-e} \leq C p^{-e},
\]
so the claim follows after letting $e' \to \infty$.
\end{proof}

Continuity of Hilbert--Kunz multiplicity follows from Corollary~\ref{Corollary to main eHK thm}.

\begin{Corollary}\label{HK continuity}
Let $(R, \mathfrak m, k)$ be a Cohen-Macaulay F-finite local ring of prime characteristic $p$ and dimension $d+c$. If $\ul{f}$ is a parameter sequence of length $c$ and  such that $\hat{R}/(\ul{f})\hat{R}$ is reduced, then for any $\delta > 0$ there exists an integer $N > 0$ such that for any $\ul{\epsilon} \in \pow{\mathfrak m^N} c$
\[
\left |\ehk (R/(\ul{f})) - \ehk(R/(\ul{f} + \ul{\epsilon})) \right| < \delta.
\]
\end{Corollary}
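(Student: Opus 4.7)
The plan is to combine the uniform convergence estimate in Corollary~\ref{Corollary to main eHK thm} with the fact that the finite-stage lengths $\length(R/(\ul{f}+\ul{\epsilon},\m^{[p^e]}))$ are locally constant in $\ul{\epsilon}$, via Lemma~\ref{Basic}. The key observation is that the constant $C$ and integer $N_1$ provided by Corollary~\ref{Corollary to main eHK thm} are independent of the perturbation $\ul{\epsilon}$, so the approximations
\[
\ehk(R/(\ul{f}+\ul{\epsilon})) \approx p^{-ed}\length\!\left(R/(\ul{f}+\ul{\epsilon},\m^{[p^e]})\right)
\]
hold with the same error $Cp^{-e}$ both at $\ul{\epsilon}=0$ and at any $\ul{\epsilon}\in\pow{\m^{N_1}}{c}$.

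Concretely, first I would invoke Corollary~\ref{Corollary to main eHK thm} to obtain such a $C\geq 0$ and $N_1\geq 1$. Next, given $\delta>0$, I would choose $e$ large enough that $2Cp^{-e}<\delta$, and then apply Lemma~\ref{Basic} with the $\m$-primary ideal $I=\m^{[p^e]}$ to produce $N_2\geq N_1$ so that $(\ul{f}+\ul{\epsilon},\m^{[p^e]})=(\ul{f},\m^{[p^e]})$ for every $\ul{\epsilon}\in\pow{\m^{N_2}}{c}$. In particular the two finite-length modules $R/(\ul{f},\m^{[p^e]})$ and $R/(\ul{f}+\ul{\epsilon},\m^{[p^e]})$ coincide and have equal length. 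Setting $N=N_2$ and applying the triangle inequality then gives
\[
\left|\ehk(R/(\ul{f})) - \ehk(R/(\ul{f}+\ul{\epsilon}))\right| \leq Cp^{-e} + 0 + Cp^{-e} < \delta,
\]
which is the desired conclusion.

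There is essentially no obstacle here: the substantive work has already been done in Theorem~\ref{main eHK thm} and its corollary, where the uniformity of the convergence rate in $\ul{\epsilon}$ is established via the discriminant stability of Lemma~\ref{Dis lemma}. Lemma~\ref{Basic} handles the trivial observation that a fixed truncation of $R/(\ul{f})$ by a high power of the Frobenius does not see small perturbations of $\ul{f}$. The only mild care needed is to pick $e$ first (to control the tail of the Hilbert-Kunz limit) and then pick $N$ large enough (depending on that $e$) to freeze the finite-stage length.
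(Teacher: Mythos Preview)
Your proposal is correct and follows essentially the same approach as the paper: both combine the uniform convergence estimate from Corollary~\ref{Corollary to main eHK thm} with Lemma~\ref{Basic} to freeze the finite-stage length, then apply the triangle inequality to obtain the $2Cp^{-e}$ bound. Your version is in fact slightly more explicit about the order of choices (first $C$ and $N_1$, then $e$, then $N_2\geq N_1$), but the argument is the same.
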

\begin{proof}
By Lemma~\ref{Basic}, for any given $e$ there exists $N$ such that 
\[
\length \left (R/(\ul{f} + \ul{\epsilon}, \m^{[p^e]}) \right) =
\length \left (R/(\ul{f}, \m^{[p^e]}) \right)
\]
for all $\ul{\epsilon} \in \pow{\m^N}c$.
By Corollary~\ref{Corollary to main eHK thm}, we may further assume that for some constant $C > 0$
\[
\left |\ehk (R/(\ul{f} + \ul{\epsilon})) - \frac{1}{p^{ed}}
\length \left (R/(\ul{f} + \ul{\epsilon}, \m^{[p^e]}) \right) \right| < Cp^{-e}.
\]
Thus
\[
|\ehk (R/(\ul{f} + \ul{\epsilon})) - \ehk(R/(\ul{f})) | < 2Cp^{-e}
\]
and the claim follows.
\end{proof}

\subsection{Continuity of F-signature}

To establish continuity of F-signature we will need an effective way of comparing the $e$th Frobenius splitting numbers of rings which are the homomorphic image of a common ring. We begin by recalling the following theorem of Huneke and Leuschke.

\begin{Theorem}{\cite[Proof of Theorem~11]{HunekeLeuschke}}
\label{Gorenstein splitting numbers}
Let $(R,\m,k)$ be an $F$-finite local Gorenstein ring of prime characteristic $p>0$ and Krull dimension $d$. Suppose $x_1,\ldots,x_d$ is a system of parameters of $R$ and $u$ generates the socle mod $(x_1,\ldots,x_d)$. Then for each $e\in\mathbb{N}$
\[
a_e(R)=[k:k^{p^e}]\lambda(R/((x_1,\ldots,x_d)^{[p^e]}:u^{p^e})).
\]
\end{Theorem}

\begin{Corollary}\label{splitting number}
 Let $(R,\m,k)$ be a local Gorenstein F-finite ring of prime characteristic $p$. Let $\ul{f}$ be a regular sequence of length $c$. Then for any integer $e$ there exists an integer $N$ such that $a_e(R/(\ul{f})) = a_e(R/(\ul{f} + \ul{\epsilon}))$ for any $\ul{\epsilon} \in \pow {\m^N} c$. 
\end{Corollary}

\begin{proof} If $\dim(R) = c$, then $R/(f)$ is artinian and, thus, has a splitting if an only if it is a field.
If we take $\ul{\epsilon} \in \pow{\m^2} c$, then $(\ul {f}) = \m$ if and only if $(\ul{f}+ \ul{\epsilon}) = \m$.
So the Frobenius splitting numbers of $R/(\ul{f})$ and $R/(\ul{f}+\ul{\epsilon})$ will be the same, either $0$ or $[k : k^{p^e}]$, the latter of which occurs if and only if $(\ul{f}) = \m$.

Suppose $\dim(R)=d+c > c$ and let $x_1,\ldots, x_d$ be a choice of system of parameters for $R/(\underline{f})$ and $u\in R$ generates the socle mod $(\underline{f},x_1,\ldots, x_d)$. By the proof of Corollary~\ref{stays sop} there exists an integer $N$ so that for each $\underline{\epsilon}\in (\m^{N})^{\oplus c}$ one has that $(\underline{f}+\underline{\epsilon}, x_1,\ldots,x_d)=(\underline{f},x_1,\ldots,x_d)$ and $(\underline{f}+\underline{\epsilon}, x^{p^e}_1,\ldots,x^{p^e}_d)=(\underline{f},x^{p^e}_1,\ldots,x^{p^e}_d)$. For such choices of $\underline{\epsilon}$ the sequence $x_1,\ldots,x_d$ is a full system of parameters for $R/(\underline{f}+\underline{\epsilon})$, $u$ generates the socle of $R/(\underline{f}+\underline{\epsilon},x_1,\ldots,x_d)$, and by multiple applications of Theorem~\ref{Gorenstein splitting numbers}
\begin{align*}
a_e(R/(\underline{f}))&=[k:k^{p^e}]\lambda(R/((\underline{f},x_1^{p^e},\ldots,x_d^{p^e}):u^{p^e}))\\
&=[k:k^{p^e}]\lambda(R/((\underline{f}+\underline{\epsilon},x_1^{p^e},\ldots,x_d^{p^e}):u^{p^e}))=a_e(R/(\underline{f}+\underline{\epsilon}).
\end{align*}
\end{proof}

Less assumptions are needed for an inequality. 

\begin{Proposition}\label{Splitting numbers local maximum proposition} 
Let $(R, \m,k)$ be a complete F-finite ring of positive characteristic $p > 0$. 
Then for any parameter sequence $\ul{f}$ of length $c$ and a fixed positive integer $e$ there exists integer $N$ such that for all $\ul{\epsilon} \in \pow{\m^N} c$
\[
a_e (R/(\ul{f}+\ul{\epsilon})) \leq a_e (R/(\ul{f})).
\]
\end{Proposition}
\begin{proof}
Since $R$ is complete, we can represent it as a quotient of a regular local ring $(S, \m)$, $R = S/I$.
Then by \cite[Proposition~3.1]{EnescuYao}
\[
a_e(R/(f)) = [k : k^{p^e}]\length_S \left( \frac{(I, \ul{f})^{[p^e]}:_S (I,\ul{f})+\m^{[p^e]}}{\m^{[p^e]}}\right).
\]
Let $\ul{\epsilon} \in \pow{\m^N} c$ for an arbitrary $N$. Then
\[
(I, \ul{f} + \ul{\epsilon})^{[p^e]}:_S (I,\ul{f}+\ul{\epsilon}) \subseteq 
\left( (I, \ul{f})^{[p^e]} + \m^N \right ) :_S (I, \ul{f} + \ul{\epsilon}). 
\]
Since $\epsilon_i$ are in $\m^N$, 
\[
\left( (I, \ul{f})^{[p^e]} + \m^N \right ) :_S (I, \ul{f} + \ul{\epsilon}) = \left( (I, \ul{f})^{[p^e]} + \m^N \right ) :_S (I, \ul{f}).
\]
By Krull's intersection theorem
\[
\bigcap_{n = 1}^{\infty}  \left( (I, \ul{f})^{[p^e]} + \m^n \right ) :_S (I, \ul{f}) = \left( (I, \ul{f})^{[p^e]}\right ) :_S (I, \ul{f}),
\]
so by Chevalley's lemma we could have chosen $N$ so that 
\[
\left( (I, \ul{f})^{[p^e]} + \m^N \right ) :_S (I, \ul{f}) \subseteq \left( (I, \ul{f})^{[p^e]}\right ) :_S (I, \ul{f}) + \m^{[p^e]}.
\]
In which case $a_e(R/(\ul{f} + \ul{\epsilon})) \leq a_e(R/(\ul{f}))$. 
\end{proof}

\begin{Theorem}\label{not main thm}
Let $(R, \mathfrak m, k)$ be a Gorenstein F-finite ring of prime characteristic $p$ and dimension $d+c$. If $\ul{f}$ is a parameter sequence of length $c$ such that $\hat{R}/(\ul{f})\hat{R}$ is reduced then for any $\delta > 0$ there exists an integer $N > 0$ such that for any $\ul{\epsilon} \in \pow{\mathfrak m^N} c$
\[
|\s (R/(\ul{f})) - \s (R/(\ul{f} + \ul{\epsilon})) | < \delta.
\]
\end{Theorem}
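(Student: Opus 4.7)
The strategy mirrors the proof of Corollary~\ref{HK continuity}: establish uniform convergence of the normalized Frobenius splitting numbers to the F-signature at a rate that is insensitive to small perturbations of $\ul{f}$, and then invoke local constancy of the Frobenius splitting numbers $a_e$ at each fixed $e$ to close the gap.

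For the uniform convergence, I would prove an F-signature analog of Theorem~\ref{main eHK thm} and Corollary~\ref{Corollary to main eHK thm}: there exist constants $C \geq 0$ and $N_0 \geq 1$ such that for every $e \geq 1$ and every $\ul{\epsilon} \in \pow{\m^{N_0}}{c}$,
\[
\left| \s(R/(\ul{f} + \ul{\epsilon})) - \frac{a_e(R/(\ul{f} + \ul{\epsilon}))}{p^{ed}[k : k^{p^e}]} \right| \leq C\, p^{-e}.
\]
Reducing to the complete case and choosing a Cohen--Gabber Noether normalization $A \hookrightarrow R/(\ul{f})$ exactly as in Theorem~\ref{main eHK thm}, the discriminants $c_{\ul{\epsilon}} = \Dis_A(R/(\ul{f} + \ul{\epsilon}))$ annihilate the cokernels of the natural inclusions $R/(\ul{f} + \ul{\epsilon})[A^{1/p}] \hookrightarrow (R/(\ul{f} + \ul{\epsilon}))^{1/p}$; Lemma~\ref{Dis lemma} and Corollary~\ref{separability cor} ensure these discriminants are uniformly nonzero modulo a fixed power of $\m_A$ for small $\ul{\epsilon}$. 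This is the key uniform ingredient, and it allows a telescoping argument parallel to \cite[Theorem~3.6]{TuckerF-sigExists} applied to the splitting numbers $a_e$ in place of colengths, yielding the estimate above uniformly in $\ul{\epsilon}$.

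Given $\delta > 0$, I then fix $e$ with $2Cp^{-e} < \delta$ and seek $N \geq N_0$ such that $a_e(R/(\ul{f})) = a_e(R/(\ul{f} + \ul{\epsilon}))$ for every $\ul{\epsilon} \in \pow{\m^N}{c}$, whereupon the triangle inequality produces the conclusion. This last ``stabilization'' step is exactly Corollary~\ref{splitting number}. The main obstacle is a hypothesis gap: Corollary~\ref{splitting number} assumes $R$ is a domain, while in the present theorem $R$ is only Cohen--Macaulay. Since $R/(\ul{f})$ is reduced (hence generically Gorenstein), it still admits a canonical ideal, and I expect the proof of Lemma~\ref{Canonical ideal lemma} generalizes: one lifts a canonical ideal of $R/(\ul{f})$ to an ideal $J$ of $R$ which continues to induce a canonical module of $R/(\ul{f} + \ul{\epsilon})$ upon small perturbation, and then the approximately Gorenstein machinery of Lemma~\ref{approximately gorenstein lemma} and the socle-colon formula of Corollary~\ref{splitting number} proceed verbatim. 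Verifying that this lifting persists under perturbation, and that the resulting irreducible $\m$-primary ideals $(J, x_2^t, \ldots, x_d^t) + (\ul{f} + \ul{\epsilon})$ have socles generated by the same element independently of $\ul{\epsilon}$, will be the delicate point.
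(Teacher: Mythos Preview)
Your uniform-convergence half matches the paper exactly: the same Cohen--Gabber normalization, the same discriminant control via Lemma~\ref{Dis lemma} and Corollary~\ref{separability cor}, and the same bound on $|a_{e+1} - p^d[k:k^p]\,a_e|$ producing a uniform estimate $\bigl|\s(R/(\ul f + \ul\epsilon)) - a_e(R/(\ul f+\ul\epsilon))/\rank R^{1/p^e}\bigr| < D/p^e$ for all small $\ul\epsilon$.

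Where you diverge is the stabilization step. You try to force the equality $a_e(R/(\ul f)) = a_e(R/(\ul f + \ul\epsilon))$ in general by extending Corollary~\ref{splitting number} and Lemma~\ref{Canonical ideal lemma} past the domain hypothesis on $R$, and you correctly flag this as the delicate point. The paper sidesteps that generalization entirely by splitting on the value of $\s(R/(\ul f))$. If $\s(R/(\ul f)) = 0$, equality of the $a_e$ is not needed at all: the one-sided bound $a_e(R/(\ul f + \ul\epsilon)) \le a_e(R/(\ul f))$ of Lemma~\ref{Splitting numbers local maximum}---proved by an elementary colon-ideal computation in a regular presentation of $R$, with no domain or canonical-module input---already yields $\s(R/(\ul f+\ul\epsilon)) < \delta$ once $e$ is chosen large. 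If $\s(R/(\ul f)) > 0$, then $R/(\ul f)$ is strongly F-regular and hence a domain; since $\ul f$ is a regular sequence in the Cohen--Macaulay ring $R$, this forces $R$ itself to be a domain, so Corollary~\ref{splitting number} applies with its hypotheses as stated.

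Thus the case split, together with Lemma~\ref{Splitting numbers local maximum}, is precisely the device that eliminates the generalization you propose. Your route is not wrong in principle, but it commits you to rebuilding the canonical-ideal machinery of Lemma~\ref{Canonical ideal lemma} over a merely reduced $R$, which is real extra work that the paper never has to do.
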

\begin{proof}
We may assume $R$ is complete. By Theorem~\ref{Cohen-Gabber} we may choose parameters $x_1,\ldots, x_d\in R$ such that $x_1,\ldots, x_d$ is a system of parameters for $R/(\ul{f})$ and $R/(\ul{f})$ is module finite and generically separable over the regular local ring $A:=k[[x_1,\ldots,x_d]]$. In which case we have short exact sequence
\[
0 \to R/(\ul{f})[A^{1/p}] \to (R/(\ul{f}))^{1/p} \to M \to 0
\]
and $0 \neq c = \Dis_A (R/(\ul{f}))$ annihilates $M$.

Let $M$ and $M_{\ul{\epsilon}}$ be as in the proof of Theorem~\ref{main eHK thm}. In which case,  there are isomorphisms $R/(\ul{f} + \ul{\epsilon})[A^{1/p}] \cong \oplus^{p^d [k : k^p]} (R/(\ul{f} + \ul{\epsilon}))$ and short exact sequences
\[
0 \to R/(\ul{f} + \ul{\epsilon})[A^{1/p}] \to (R/(\ul{f} + \ul{\epsilon}))^{1/p} \to M_{\ul{\epsilon}} \to 0.
\]

Apply the exact functor $(-)^{1/p^e}$ to the above to get exact sequence
\[
0 \to \bigoplus^{p^d [k : k^p]}\left (  (R/(\ul{f} + \ul{\epsilon})^{1/p^e}) \right) \to 
 (R/(\ul{f} + \ul{\epsilon}))^{1/p^{e+1}} \to  M_{\ul{\epsilon}}^{1/p^e} \to 0.
\]
By (\cite[Lemma~2.1]{PolstraTucker})
\[
\frk \left (  (R/(\ul{f} + \ul{\epsilon}))^{1/p^{e+1}}  \right ) \leq p^d[k : k^p] 
\frk \left (  (R/(\ul{f} + \ul{\epsilon}))^{1/p^e}  \right) + 
\mu ((M_{\ul{\epsilon}})^{1/p^e}),
\]
or, equivalently,
\[
a_{e+1}(R/(\ul{f}+\ul{\epsilon}))\leq p^d[k : k^p]a_e(R/(\ul{f}+\ul{\epsilon}))+\mu ((M_{\ul{\epsilon}})^{1/p^e}).
\]
Techniques used in the proof of Theorem~\ref{main eHK thm} give the existence of a constant $C$ such that for all $\ul{\epsilon}\in \pow{\mathfrak m^N} c$ 
\[
\mu ((N_{\ul{\epsilon}})^{1/p^e}) = \length (N_{\ul{\epsilon}}/\m^{[p^e]}N_{\ul{\epsilon}}) 
\leq Cp^{e(d-1)}.
\]

Let $L$ and $L_{\ul{\epsilon}}$ be as in the proof Theorem~\ref{main eHK thm}. Similarly, we can also bound
\[ 
p^d[k : k^p] \frk \left ( (R/(\ul{f} + \ul{\epsilon}))^{1/p^e} \right) -  \frk \left ( (R/(\ul{f} + \ul{\epsilon}))^{1/p^{e + 1}}  \right ) 
\leq \length (L_{\ul{\epsilon}}/\m^{[p^e]}L_{\ul{\epsilon}})
\]
and can once again obtain constant $C$ independent of $\ul{\epsilon}$ so that
\[|a_{e+ 1} (R/(\ul{f} + \ul{\epsilon})) - p^d[k : k^p] a_e (R/(\ul{f} + \ul{\epsilon}))| < Cp^{e(d-1)}.\]
Since $\rank R^{1/p^e} = p^{ed} [k : k^{p^e}]$ (\cite[Proposition~2.3]{Kunz1976}), 
as explained in Corollary~\ref{HK continuity}, this gives us a uniform convergence statement:
there exist $D, N > 0$ such that for all $\ul{\epsilon} \in \pow{\m^N} c$ 
\[
\left |\s (R/(\ul{f}+ \ul{\epsilon})) - \frac 1{\rank R^{1/p^e}} a_e (R/(\ul{f} + \ul{\epsilon})) \right| < \frac{D}{p^e}.
\]
The statement now follows by employing Corollary~\ref{splitting number} and using the methods of the proof of Corollary~\ref{HK continuity}.
\end{proof}

\section{Examples and questions}

In this section we want to present examples relevant to the results in Section~\ref{Main Results}. The first example shows that we cannot expect Hilbert--Kunz multiplicity to be locally constant at reduced parameter elements.

\begin{Example}\label{not constant example}
Consider an element $f = xy$ in $R = k[[x,y,t]]$. An easy computation shows that 
$\ehk (R/ (xy)) = 2$, while by \cite[Theorem~3.1]{Conca} $\ehk (R/(xy + t^n)) = 2 - 1/n$ for $n \geq 2$.  
Thus 
\begin{enumerate}
\item there is no neighborhood of $xy$ such that $\ehk$ is constant.
\item Of course, $\eh (R/(xy + t^n)) = 2 = \eh (R/(xy))$ for all $n \geq 2$.
\item Moreover,  we have $\Gr_{(x,y,t)} (R/(xy + t^n)) = k[x,y,t]/(xy) = \Gr_{(x,y,t)} (R/(xy))$ for all $n \geq 3$. 
\end{enumerate}
\end{Example} 

\begin{Example}\label{fsig}
Essentially the same example works for F-signature. Clearly, $k[[x,y,t]]/(xy)$ is not a domain, so the F-signature is $0$. 
On the other hand, the F-signature of $k[[x,y,t]]/ (xy + t^{n})$ is known to be $1/n$
(\cite[Example~18]{HunekeLeuschke}). 

\end{Example}

By comparison, it is not that hard to see that Hilbert--Samuel multiplicity is locally constant at parameter elements in regular local ring. In fact, Srinivas and Trivedi (\cite[Corollary~5]{SrinivasTrivedi}) 
showed that the entire Hilbert function is locally constant.

\subsection{Cohen-Macaulayness}

The Cohen-Macaulay assumption of our main results may not be completely necessary. For a possible extension, one could recall that Srinivas and Trivedi (\cite[Corollary~5]{SrinivasTrivedi})
show continuity of the Hilbert--Samuel multiplicity if $R$ is generalized Cohen-Macaulay, i.e., 
$\length (\lc_\m^i (R) ) < \infty$ for all $i < \dim R$.

We also want to give two examples that show that one cannot relax Cohen-Macaulayness too much.
The first example comes from \cite[Example~1]{SrinivasTrivedi}.

\begin{Example}
Let $R$ be the localization at $\m = (x,y,z)$ of $k[x,y,z]/(xy, xz)$. 
Note that $R$ is not equidimensional.
Because $\dim R/(y) = 1$, the two multiplicities coincide and we easily see that
\[\ehk (R/(y)) = \eh (R/(y)) = \eh (k[x,z]_\m/(xz)) = 2.\]

On the other hand, $R/(y + x^N) = k[x,z]_\m/(x^{N+1}, xz)$ has the only minimal prime $(x)$ 
for all $N$. Hence, by the associativity formula,
\[
\ehk (R/(y + x^N)) = \eh(R/(y + x^N)) = \eh (R/(x)) \length (R_{(x)}) = 1.
\]
\end{Example}

\begin{Proposition}\label {bad LC}
Let $R$ be the subring 
$k[[x^3, x^2y, y^3, y^2z, z^3, z^2x]] \subseteq k[[x,y]]$. 
Observe that $x^3, y^3, z^3$ form a system of parameters and consider a family of ideals $J_k = (x^3, y^3 + z^{3k})$. 
Then:
\begin{enumerate}
\item $\ehk (R/(x^3, y^3)) = \eh (R/(x^3, y^3)) = 11$,
\item $\ehk (R/J_k) = \eh (R/J_k) \neq 11$ for all $k \neq 1 \pmod 3$. 
\end{enumerate}
\end{Proposition}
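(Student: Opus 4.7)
The plan is to reduce to Hilbert-Samuel multiplicity computations, since both $R/(x^3,y^3)$ and $R/J_k$ are one-dimensional local rings and in dimension one the Hilbert-Kunz and Hilbert-Samuel multiplicities of any $\m$-primary ideal coincide (apply the associativity formula to reduce to a one-dimensional Cohen-Macaulay local domain $T$, where any $\m_T$-primary ideal has a principal reduction $(a)$, and the sequences $0 \to T/(a) \to T/(a^n) \to T/(a^{n-1}) \to 0$ give $\length(T/(a^n)) = n\length(T/(a))$, whence $\ehk((a);T) = \eh((a);T) = \length(T/(a))$). The dimension of $R$ is three because $R \subseteq k[[x,y,z]]$ is module-finite: each of $x, y, z$ satisfies a monic cubic over $R$. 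Thus $x^3, y^3, z^3$ is a system of parameters, and both $(x^3, y^3)R$ and $J_k R$ have height two.

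For part (1), I would identify the unique minimal prime of $(x^3,y^3)R$ as the contraction $\p := (x,y) \cap R = (x^3, x^2y, y^3, y^2z, z^2x)R$, verified by the observation that $R/(x^3, x^2y, y^3, y^2z, z^2x) \cong k[[z^3]]$ is a domain (the only surviving generator is $z^3$). Since $R/\p \cong k[[z^3]]$ has $\eh(\m; R/\p) = 1$, the associativity formula yields $\eh(R/(x^3, y^3)) = \length_{R_\p}(R_\p/(x^3,y^3)R_\p)$, which is to be shown equal to $11$. One way is to compare with the overring length $\length_{R_\p}(k[[x,y,z]]_{(x,y)}/(x^3,y^3)) = 27$, which arises from the nine-dimensional $k((z))$-vector space with basis $\{x^a y^b : 0 \le a, b \le 2\}$, multiplied by the degree of the residue field extension $k((z^3)) \subseteq k((z))$, and then subtract the length contribution of the cokernel $k[[x,y,z]]_{(x,y)}/R_\p$ modulo $(x^3,y^3)$.

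For part (2), first note $y^3 + z^{3k} = y^3 + (z^3)^k \in R$. After extending scalars to an algebraic closure of $k$ (which does not change the multiplicities), factor $y^3 + z^{3k} = \prod_{i=1}^3 (y - \zeta_i z^k)$ with $\zeta_i^3 = -1$, so that the minimal primes of $J_k k[[x,y,z]]$ are $\mathfrak{P}_i = (x, y - \zeta_i z^k)$. The composite $R \to k[[x,y,z]]/\mathfrak{P}_i \cong k[[z]]$, sending $x \mapsto 0$ and $y \mapsto \zeta_i z^k$, maps the six generators of $R$ to $0, 0, -z^{3k}, \zeta_i^2 z^{2k+1}, z^3, 0$; since $\zeta_i^2 \in k$, in each case the image subring is the numerical semigroup ring $k[[z^3, z^{2k+1}]]$. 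When $k \not\equiv 1 \pmod 3$, $\gcd(3, 2k+1) = 1$, and $k[[z^3, z^{2k+1}]]$ then has Hilbert-Samuel multiplicity equal to its smallest generator, namely $3$. By the associativity formula, $\eh(R/J_k) = 3 \cdot \sum_\q \length_{R_\q}((R/J_k)_\q)$, a positive multiple of $3$, which cannot equal $11$.

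The main technical obstacle is the exact length computation $\length_{R_\p}(R_\p/(x^3,y^3)R_\p) = 11$ in part (1); this requires careful bookkeeping because $R$ is not normal at $\p$, so the cokernel correction is nontrivial (one either computes the $\Tor$ term in $0 \to R_\p \to S_\p \to S_\p/R_\p \to 0$ tensored with $R_\p/(x^3, y^3)R_\p$ directly or tracks it through a trace-map argument). A more concrete alternative is to compute the Hilbert polynomial of $R/(x^3, y^3)$ by counting a $k$-basis of $R/(x^3, y^3, \m^N)$ for large $N$: the six generators of $R$ fix an explicit monomial sub-semigroup of $\N^3$, and the count of surviving monomials gives $11N + O(1)$, whose linear coefficient is $\eh(R/(x^3, y^3)) = 11$.
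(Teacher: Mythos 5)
Your overall architecture is the same as the paper's: reduce to Hilbert--Samuel multiplicity in dimension one, apply the associativity formula, identify the unique minimal prime $P=(x^3,x^2y,y^3,y^2z,z^2x)$ of $(x^3,y^3)$ with $R/P\cong k[[z^3]]$ regular, and for part (2) reduce to the quotient by the minimal prime(s) of $J_k$, which is (up to the isomorphism $a=z^{2k+1}$, $b=z^3$) the semigroup ring $k[[z^3,z^{2k+1}]]$ of multiplicity $3$ when $3\nmid 2k+1$, so that $\eh(R/J_k)$ is divisible by $3$ and cannot be $11$. Part (2) is complete and correct (your route through the minimal primes of $J_kk[[x,y,z]]$ and their contractions is a mild, arguably cleaner, variant of the paper's direct verification that $\sqrt{J_k}$ is prime in $R$; the standard facts you are implicitly using --- that $\sqrt{J_kS}\cap R=\sqrt{J_kR}$ for the integral extension $R\subseteq S=k[[x,y,z]]$, and that the image of $\m_R$ in $R/\q\cong k[[z^3,z^{2k+1}]]$ is the full maximal ideal --- are easy to supply).

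The genuine gap is in part (1): you never actually establish $\length_{R_P}\bigl(R_P/(x^3,y^3)R_P\bigr)=11$, which is the entire numerical content of the statement (and the number that part (2) plays against). You explicitly defer it as ``the main technical obstacle'' and sketch two strategies --- subtracting a correction term from $\length_{R_P}(S_P/(x^3,y^3)S_P)=27$, or counting monomials in $R/(x^3,y^3,\m^N)$ --- but execute neither. The first strategy is genuinely delicate: since $x^3,y^3$ need not be a regular sequence on $(S/R)_P$, the correction involves both $\length\bigl((S/R)_P/(x^3,y^3)\bigr)$ and a $\Tor_1$ term, neither of which you compute, and there is no a priori reason the bookkeeping closes without doing the work. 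The paper instead finishes in one step by writing $\widehat{R_P}/(x^3,y^3)\cong k(z^3)[[x^3,y^3,x^2y,y^2/z^2,x/z]]/(x^3,y^3)$ and exhibiting the explicit $11$-element basis $1,\ x/z,\ y^2/z^2,\ x^2/z^2,\ x^2y,\ xy^2,\ x^2y^2/z,\ x^3y/z,\ y^4/z,\ xy^4/z^2,\ x^4y/z^2$ over $k(z^3)$; some such explicit computation (this basis, or an honest monomial count in your second strategy) must be carried out for the proof to be complete.
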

\begin{proof}
First, we observe that $\dim R/(x^3, y^3) = \dim R/J_k = 1$, so multiplicity equals to Hilbert--Kunz multiplicity.

We compute $\eh (R/(x^3, y^3))$ by using the associativity formula. 
Observe that 
\[
P = (x^3, x^2y, y^3, y^2z, z^2x) = \sqrt{(x^3, y^3)}
\] 
is prime. 
Thus the associativity formula gives that $\ehk (R/(x^3, y^3)) = \ehk (R/P) \length (R_P/(x^3, y^3))$.

The quotient $R/P \cong k[z^3]$ is regular, so $\ehk (R/P) = 1$.
Now, 
\[
\widehat {R_P}/(x^3, y^3)  \cong k(z^3)[[x^3, y^3, x^2y, y^2/z^2, x/z]]/(x^3, y^3)
\]
and it is easy to compute the basis of this ring over $k(z^3)$:
\[
1, x/z, y^2/z^2, x^2/z^2, x^2y, xy^2, x^2y^2/z, x^3y/z, y^4/z, xy^4/z^2, x^4y/z^2. 
\]
Therefore, $\eh (R/(x^3, y^3)) = 11$.

The computation for $J_k$ is similar but a bit more complicated. 
First of all, if $k \neq 1 \pmod 3$ (so $2k+1$ is not divisible by $3$), then 
\[
Q = (x^3, x^2y, z^2x, y^3 - z^{3k}) = \sqrt{J_k}
\]
is prime as can be seen by taking the presentation
\[
k[[a, b]]/(a^3 - b^{2k + 1}) \to R/Q \cong k[[y^2z, z^3]]/(y^3 - z^{3k}).
\]
In this case, $\eh (R/Q) = 3$, so $\eh (R/J_k)$ is divisible by $3$ and we already know that it cannot be equal to $11$.
\end{proof}

Thus even Hibert--Samuel multiplicity does not behave well in this example. 
Observe that $xy^2 = x^3y^3/x^2y$ 
and $xyz = y^3z^2x/y^2z$, so the normalization of $R$ is the Veronese subring
\[
V = k[[x^3, x^2y, xy^2, y^3, y^2z, yz^2, z^3, z^2x, zx^2]].
\] 
Because $xy^2 \cdot y^{3n} \notin R$ for all $n$, the quotient $S/R$ does not have finite length. 
On the other hand, $x^3y^3V \subseteq R$, so $\dim S/R$ must equal to $1$.

Therefore since $\depth V = 2$, from the exact sequence of local cohomology we have
\[
0 \to \lc_\m^0 (S/R) \to \lc_\m^1(R) \to 0 \to \lc_\m^1 (S/R) \to \lc_\m^2 (R) \to 0 = \lc_\m^2(S).
\]
Therefore, $\length(\lc_\m^1(R)) < \infty$, but $\length(\lc_\m^2 (R)) = \infty$.
This example shows that the assumptions of Srinivas and Trivedi are close to being necessary.

\subsection{Questions} The techniques used to prove the main theorems of the paper, which are summarized in Theorem~\ref{Intro theorem}, relies on the assumption that the parameter sequence $f_1,\ldots,f_h$ forms a reduced ideal.

\begin{Question}
Let $(R,\m,k)$ be a Cohen-Macaulay ring of prime characteristic $p$. If $(f_1,\ldots, f_c)$ is a parameter ideal, but $R/(f_1,\ldots, f_c)$ is not necessarily reduced, then given $\delta>0$ does there exits an integer $N>0$ such that for any $\epsilon_1, \ldots ,\epsilon_h\in \m^N$
\begin{enumerate}
\item $|\ehk(R/(f_1,\ldots ,f_c))-\ehk(R/(f_1+\epsilon_1, \ldots, f_c+\epsilon_c))|<\delta$, and
\item (if $R$ is Gorenstein) $|\s(R/(f_1,\ldots ,f_c))-\s(R/(f_1+\epsilon_1, \ldots, f_c+\epsilon_c))|<\delta$?
\end{enumerate}

\end{Question}

Originally, the second author hoped to approach Question~\ref{hypersurface q}, 
by passing to the associated graded ring. Namely, Srinivas and Trivedi (\cite[Theorem~3]{SrinivasTrivedi}) 
show $\Gr_\mathfrak m (R/(f))$ is a locally constant function of $f$ in $\mathfrak m$-adic topology,
which immediately implies that Hilbert-Samuel multiplicity is locally constant.
 
However the Hilbert--Kunz multiplicities of a local ring and its associated graded ring need not 
be equal (\cite[p. 302]{WatanabeYoshida2000} and also Example~\ref{not constant example}).
Moreover, Hilbert--Kunz multiplicity is not an invariant of the associated graded ring, 
i.e., as we see in Example~\ref{not constant example}, it may happen that $\ehk (R) \neq \ehk (S)$ even though 
$\Gr_{\mathfrak m_R}(R) \cong \Gr_{\mathfrak m_S} (S)$. 
Investigation of the following question could yield an explanation.

\begin{Question}\label{convergence q}
Does the convergence rate of the Hilbert--Kunz multiplicity come from the associated graded ring?
Namely, is there a constant $C$ that depends only on the associated graded ring of $S$ such that
\[
\left |\ehk (S) - \frac{1}{p^{ed}} \length \left (S/(\mathfrak m)^{[p^e]} \right) \right| < \frac{C}{p^e}
\]
for all $e$?
\end{Question}

This question can be stated even more explicitly: consider the family of ideals in $\Gr_{\mathfrak m}(R)$ 
\[
J_e = \bigoplus_{n \geq 0} \frac{\mathfrak m^{[p^e]} \cap \mathfrak m^n + \mathfrak m^{n+1}}{\mathfrak m^{n+1}}.
\]
By the definition, $\length (R/\mathfrak m^{[p^e]}) = \length (\Gr_{\mathfrak m} (R))/J_e)$. Because $J_e^{[p]} \subseteq J_{e + 1}$, we can show existence of $\ehk(R)$ directly in $\Gr_{\mathfrak m} (R)$ (\cite[Theorem~4.3]{PolstraTucker}). However, in order to get the convergence rate, the current techniques (\cite[Corollary~4.5]{PolstraTucker}) require us to find a $p^{-1}$-linear map
$\psi$ on $\Gr_{\mathfrak m} (R)$ such that $\psi (J_{e + 1}) \subseteq J_e$ for all $e$.

Because for a fixed $e$ the individual Hilbert--Kunz function 
$R/(f, \mathfrak m^{[p^e]})$ is clearly continuous (Lemma~\ref{Basic}), we 
can employ  the uniform convergence machinery to show that a positive answer to Question~\ref{convergence q} gives a positive answer to Question~\ref{hypersurface q}. However, we currently do not see how to approach Question~\ref{convergence q}, 
so we had to search for other methods. 

We also want to offer two questions motivated by intuitive understanding of singularities: 
we expect that singularity will not get worse after a sufficiently small perturbation.

\begin{Question}\label{worse sing}
Let $(R,\m,k)$ be a local ring of prime characteristic $p$. Does there $N$ such that for all $\epsilon \in \m^N$
$\ehk (R/(f)) \geq \ehk (R/(f + \epsilon))$?
\end{Question}

\begin{Question}\label{worse sing 2}
Let $(R,\m,k)$ be a local Gorenstein ring of prime characteristic $p$. Does there $N$ such that for all $\epsilon \in \m^N$
$\s (R/(f)) \leq \s (R/(f+\epsilon))$?
\end{Question}

We remark that Srinivas and Trivedi (\cite[Corollary~2]{ SrinivasTrivedi}) showed that the analogue of Question~\ref{worse sing} holds for Hilbert--Samuel multiplicity. They prove that if $(R,\m,k)$ is local then one can always find a small neighborhood $\mathfrak m^N$ such that $\eh (R/(f)) \geq \eh (R/(f+\epsilon))$ for all $\epsilon \in \mathfrak m^N$.

\subsection{Comments on F-pure thresholds}
A conjecture related to the results of this paper is the ACC conjecture for F-pure thresholds, 
very recently resolved by Sato (\cite{Sato},\cite{Sato2}),
which states that if $(R,\m,k)$ is a regular local ring and $f_1,f_2, f_3, \ldots$ elements of $R$ such that
\[
\fpt(f_1) \leq \fpt (f_2) \leq \fpt(f_3) \leq \cdots 
\]
then $\fpt(f_n)=\fpt(f_{n+1})$ for all $n\gg 0$.  
The conjecture is motivated by Shokurov's ACC conjecture for log canonical thresholds (\cite{Shokurov1992}), which was  also resolved in the smooth case (\cite{DFEM, HMX}). As the F-pure threshold function is continuous with respect to the $\m$-adic topology, it follows that if the ACC conjecture holds then for each $f\in R$ there exists $N\in \N$ such that for each $\epsilon\in \m^N$, $\fpt (f+\epsilon)\geq \fpt(f)$. Hern\'andez, N\'u\~nez-Betancourt, and Witt (\cite{HNBW}) recently investigated this particular implication of the ACC conjecture. Their techniques establish that if the Jacobian ideal of an element $f\in R$ is $\m$-primary, then $\fpt (f) = \fpt(g)$ for all $g$ sufficiently close to $f$. Their result is also recovered by Samuel's theorem from (\cite{Samuel1956}) mentioned in the introduction.

A natural analogue to the ACC conjecture for Hilbert--Kunz multiplicity would be a descending chain condition. That is given a sequence of elements $f_1,f_2,f_3,\ldots$ in a regular local ring $(R,\m,k)$ such that $\ehk(R/(f_1))\geq \ehk(R/(f_2))\geq \ehk(R/(f_3))\geq \cdots$ is it necessarily the case that $\ehk(R/(f_n))=\ehk(R/(f_{n+1}))$ for all $n\gg 0$? Work of Monsky (\cite{Monsky1998}) shows the natural analogue of the ACC conjecture for Hilbert--Kunz multiplicity does not hold.

\begin{Example}\label{Monsky example 1}
In \cite{Monsky1998} Monsky computed the Hilbert--Kunz multiplicity of  a family of hypersurfaces of the form  
$R_\alpha = k[[x,y,z]]/(z^4 +xyz^2 + (x^3 + y^3)z + \alpha x^2y^2)$ where $\alpha \in k$,
an algebraically closed field of characteristic $2$.
His computations show that there is a sequence $\alpha_n$ such that $\ehk (R_{\alpha_n}) = 3 + 4^{-n}$,
which gives an infinite decreasing sequence. 
\end{Example}

We suspect there is a family of hyperplanes, similar to that of Example~\ref{Monsky example 1}, whose F-signatures form an infinite increasing sequence.

\specialsection*{Acknowledgements}

We want to thank Ian Aberbach, Bhargav Bhatt, Dale Cutkosky, and Mel Hochster for discussion of this work. In particular, Bhargav Bhatt has helped shape Lemma~\ref{Dis lemma} and Proposition~\ref{bad LC} was suggested by Mel Hochster.
This project started while the first named author was visiting University of Michigan, 
we thank Mel Hochster and University of Michigan for making that trip possible.

We are grateful to Linquan Ma for pointing an inaccuracy in an earlier draft of this paper 
and to Alessandro De Stefani who corrected our mistake by pointing out that Theorem~\ref{Intro theorem}
needs the Gorenstein assumption.

\bibliographystyle{alpha}
\bibliography{References}

\end{document}